\newcommand{\End}{\mathrm{End}}
\newcommand{\wEnd}{\mathrm{wEnd}}
\newcommand{\sEnd}{\mathrm{sEnd}}
\newcommand{\swEnd}{\mathrm{swEnd}}
\newcommand{\Aut}{\mathrm{Aut}}
\theoremstyle{plain}
\newtheorem{theorem}{Theorem}[section]
\newtheorem{proposition}[theorem]{Proposition}
\newtheorem{lemma}[theorem]{Lemma}
\newtheorem{corollary}[theorem]{Corollary}
\theoremstyle{remark}
\def\im{\mathop{\mathrm{Im}}\nolimits} 
\def\ker{\mathop{\mathrm{Ker}}\nolimits} 
\def\N{\mathbb N}
\numberwithin{equation}{section}
\def\inv{\mathop{\mathrm{inv}}\nolimits} 
\def\Inv{\mathop{\mathrm{Inv}}\nolimits} 
\def\rep{\mathop{\mathrm{rep}}\nolimits} 
\begin{document}

\title{Ranks of monoids of endomorphisms of a finite undirected path}

\author{I. Dimitrova\footnote{This work was developed within the 
FCT Project UID/MAT/00297/2013 of CMA.}, V.H. Fernandes\footnote{This work was developed within the 
FCT Project UID/MAT/00297/2013 of CMA and of Departamento de Matem\'atica da Faculdade de Ci\^encias e Tecnologia da Universidade Nova de Lisboa.}, J. Koppitz\footnote{This work was developed within the 
FCT Project UID/MAT/00297/2013 of CMA.}\, and T.M. Quinteiro\footnote{This work was developed within the 
FCT Project UID/MAT/00297/2013 of CMA and of Instituto Superior de Engenharia de Lisboa.}.
}

\maketitle

\renewcommand{\thefootnote}{}

\footnote{2010 \emph{Mathematics Subject Classification}: 05C38, 20M10, 20M20, 05C25}

\footnote{\emph{Keywords}: graph endomorphisms, paths, generators, rank.}

\renewcommand{\thefootnote}{\arabic{footnote}}
\setcounter{footnote}{0}

\begin{abstract}
In this paper we study the widely considered endomorphisms and weak endomorphisms of a finite undirected path from monoid generators perspective. Our main aim is to determine the ranks of the monoids $\wEnd P_n$ and $\End P_n$ of all weak endomorphisms and all endomorphisms of the undirected path $P_n$ with $n$ vertices. We also consider strong and strong weak endomorphisms of $P_n$.  
\end{abstract}

\section*{Introduction and Preliminaries} 

In the same way that automorphisms of graphs allow to establish natural connections between Graph Theory and Group Theory,  endomorphisms of graphs do the same between Graph Theory and Semigroup Theory. 
For this reason, it is not surprising that monoids of endomorphisms of graphs have been attracting the attention of several authors over the last decades. In fact, from combinatorial properties to more algebraic concepts have been extensively studied. 
Regularity, in the sense of Semigroup Theory, is one of the most studied notions. Regular semigroups constitutes a very important class in Semigroup Theory. A general solution to the problem, posed in 1988 by M\'arki \cite{Marki:1988}, of which graphs have a regular monoid of endomorphisms seems to be very difficult to obtain. Nevertheless, for some special classes of graphs, various authors studied and solved this question (for instance, see  \cite{Fan:1993,Fan:1997,Fan:2002,Hou&Gu:2016,Hou&Gu&Shang:2014,Hou&Gu&Shang:2015,Hou&Luo&Fan:2012,Hou&Song&Gu:2017,Knauer&Wanichsombat:2014,Li:2006,Li&Chen:2001,Lu&Wu:2013,Pipattanajinda&Knauer&Gyurov&Panma:2016,Wilkeit:1996}). 

\medskip 

In this paper, we focus our attention on a very important invariant of a semigroup or a monoid, which has been the subject of intensive research in Semigroup Theory. We are referring to the \textit{rank}, i.e. to the minimum number of generators of a semigroup or a monoid.

\medskip 

Let $\Omega$ be a finite set
with at least $2$ elements.
It is well-known that the full symmetric group  of $\Omega$
has rank $2$ (as a semigroup, monoid or group). Furthermore, the monoid of all transformations
and the monoid of all partial transformations of $\Omega$ have
ranks $3$ and $4$, respectively.
The survey \cite{Fernandes:2002} presents 
these results and similar ones for other classes of transformation monoids,
in particular, for monoids of order-preserving transformations and
for some of their extensions. More recently, for instance, the papers 
\cite{AlKharousi&Kehinde&Umar:2014s,Araujo&al:2015,Araujo&Schneider:2009,Cicalo&al:2015,Fernandes&al:2014,Fernandes&al:2016,Fernandes&al:2018ip,Fernandes&Quinteiro:2014,Fernandes&Sanwong:2014,Huisheng:2009,Zhao:2011,Zhao&Fernandes:2015} 
are dedicated to the computation of the ranks of certain (classes of transformation) semigroups or monoids.

\medskip 

Now, let $G=(V,E)$ be a simple graph (i.e. undirected, without loops and without multiple edges). 
Let $\alpha$ be a full transformation of $V$. We say that $\alpha$ is: 
\begin{itemize}
\item an \textit{endomorphism} of $G$ if $\{u,v\}\in E$ implies  $\{u\alpha,v\alpha\}\in E$, for all $u,v\in V$;
\item a \textit{weak endomorphism} of $G$ if $\{u,v\}\in E$ and $u\alpha\ne v\alpha$ imply  $\{u\alpha,v\alpha\}\in E$, for all $u,v\in V$;
\item a \textit{strong endomorphism} of $G$ if $\{u,v\}\in E$ if and only if  $\{u\alpha,v\alpha\}\in E$, for all $u,v\in V$;
\item a \textit{strong weak endomorphism} of $G$ if $\{u,v\}\in E$ and $u\alpha\ne v\alpha$ if and only if $\{u\alpha,v\alpha\}\in E$, for all $u,v\in V$;
\item an \textit{automorphism} of $G$ if $\alpha$ is a bijective strong endomorphism (i.e. $\alpha$ is bijective and $\alpha$ and $\alpha^{-1}$ are both endomorphisms). For finite graphs, any bijective endomorphism is an automorphism. 
\end{itemize}

Denote by:
\begin{itemize}
\item $\End G$ the set of all endomorphisms of $G$;
\item $\wEnd G$ the set of all weak endomorphisms of $G$;
\item $\sEnd G$ the set of all strong endomorphisms of $G$;
\item $\swEnd G$ the set of all strong weak endomorphisms of $G$;
\item $\Aut G$ the set of all automorphisms of $G$. 
\end{itemize}

Clearly, $\End G$, $\wEnd G$, $\sEnd G$, $\swEnd G$ and $\Aut G$ are monoids under composition of maps. Moreover, $\Aut G$ is also a group.  
It is also clear that $\Aut G\subseteq\sEnd G\subseteq\End G,    
[\text{respectively},  \swEnd G] \subseteq\wEnd G$ 
\begin{center}
\begin{picture}(55,140)(0,10)
\put(40,60){\line(-1,1){40}}
\put(40,60){\line(1,1){40}}
\put(80,100){\line(-1,1){40}}
\put(0,100){\line(1,1){40}}
\put(40,20){\line(0,1){40}}
\put(37,16.5){$\bullet$}\put(45,16.5){$\Aut G$}
\put(37,137.5){$\bullet$}\put(45,140){$\wEnd G$}
\put(77,98){$\bullet$}\put(84,100){$\End G$}
\put(-2,98){$\bullet$}\put(-45,100){$\swEnd G$}
\put(37.5,58){$\bullet$}\put(45,55){$\sEnd G$}
\end{picture}
\end{center}
(these inclusions may not be strict).

\smallskip 

Define $N(u)=\{v\in V\mid \{u,v\}\in E\}$ (the \textit{neighbors} of $u$), for all $u\in V$, 
and a binary relation $R_G$ on $V$ by $(u,v)\in R_G$ if and only if $N(u)=N(v)$, for all $u,v\in V$. 

\medskip 

Let $P_n$ be the undirected path with $n$ vertices. The number of endomorphisms of $P_n$ has been determined by Arworn 
\cite{Arworn:2009} (see also the paper \cite{Michels&Knauer:2009} by Michels and Knauer). 
In addition, several other combinatorial and algebraic properties of $P_n$ were also studied in these two papers and, 
for instance, in \cite{Arworn&Knauer&Leeratanavalee:2008,Hou&Luo&Cheng:2008}. 

\medskip 

In this paper, our main objective is to determine the ranks of the monoids $\wEnd P_n$ and $\End P_n$, 
what we will do in Section \ref{mainaim}, 
after presenting some more basic properties in Section \ref{basics}. The monoids $\swEnd P_n$, $\sEnd P_n$ and $\Aut P_n$ will also be considered in Section \ref{basics}. 

\medskip 

For general background on Semigroup
Theory and standard notation, we refer the reader to Howie's book \cite{Howie:1995}. 
On the other hand, regarding Algebraic Graph Theory, our main reference is Knauer's book \cite{Knauer:2011}.  

\section{Basic properties} \label{basics} 

Let $n\in\N$. The undirected \textit{path} with $n$ vertices may be defined as being the following simple graph:
$$
P_n=\left(\{1,\ldots,n\},\{\{i,i+1\}\mid i=1,\ldots,n-1\}\right). 
$$

For $u,v\in\{1,\ldots,n\}$, it is clear that $\{u,v\}$ is an edge of $P_n$ if and only if $|u-v|=1$. 
Hence, clearly: 

\begin{proposition}\label{form}
Let $\alpha$ be a full transformation of $\{1,\ldots,n\}$. Then, we have: 
\begin{enumerate}
\item $\alpha\in\wEnd P_n$ if and only if $|i\alpha-(i+1)\alpha|\le1$, for $i=1,\ldots,n-1$; 

\item $\alpha\in\End P_n$ if and only if $|i\alpha-(i+1)\alpha|=1$, for $i=1,\ldots,n-1$. 
\end{enumerate}
\end{proposition}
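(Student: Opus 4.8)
The plan is to unwind the definitions of weak endomorphism and endomorphism from the Introduction and specialize them to the concrete edge structure of $P_n$, using the observation already recorded in the excerpt that $\{u,v\}$ is an edge of $P_n$ precisely when $|u-v|=1$. The two statements are near-identical in form, differing only in whether adjacent vertices may collapse to a single image, so I would prove them in parallel.

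First, for part (1), I would take an arbitrary full transformation $\alpha$ of $\{1,\dots,n\}$ and recall that $\alpha\in\wEnd P_n$ means $\{u,v\}\in E$ and $u\alpha\ne v\alpha$ imply $\{u\alpha,v\alpha\}\in E$. Since the edges of $P_n$ are exactly the pairs $\{i,i+1\}$ for $i=1,\dots,n-1$, the condition $\{u,v\}\in E$ is equivalent to $v=u+1$ (up to swapping $u,v$), so the weak endomorphism condition reduces to: for each $i$, if $i\alpha\ne(i+1)\alpha$ then $\{i\alpha,(i+1)\alpha\}\in E$, i.e. $|i\alpha-(i+1)\alpha|=1$. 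The key step is then to observe that the single inequality $|i\alpha-(i+1)\alpha|\le 1$ captures exactly this: indeed $|i\alpha-(i+1)\alpha|\le 1$ holds if and only if either $i\alpha=(i+1)\alpha$ (the difference is $0$) or $|i\alpha-(i+1)\alpha|=1$ (the images form an edge), which is precisely the disjunction allowed by the weak endomorphism condition. I would spell out both implications of this equivalence to close part (1).

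For part (2), I would argue analogously but using the endomorphism condition, which drops the clause $u\alpha\ne v\alpha$ and instead demands $\{u\alpha,v\alpha\}\in E$ whenever $\{u,v\}\in E$. Applied to the consecutive pairs $\{i,i+1\}$, this says that $\{i\alpha,(i+1)\alpha\}$ must itself be an edge, hence $i\alpha\ne(i+1)\alpha$ and $|i\alpha-(i+1)\alpha|=1$; conversely, if $|i\alpha-(i+1)\alpha|=1$ for every $i$, then consecutive images are always adjacent, so $\alpha$ preserves all edges and is an endomorphism. The only mild subtlety worth flagging explicitly is that it suffices to check the edge condition on consecutive pairs $\{i,i+1\}$ rather than on all pairs $\{u,v\}$, because every edge of $P_n$ has this consecutive form; this is the single structural fact that makes both characterizations purely local.

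I do not anticipate a genuine obstacle here: the proposition is essentially a translation of the abstract definitions into the arithmetic of $P_n$, and the phrase \emph{Hence, clearly} preceding the statement signals that the authors regard it as immediate. The one point requiring a moment of care is keeping the two cases $i\alpha=(i+1)\alpha$ and $|i\alpha-(i+1)\alpha|=1$ straight in part (1), since collapsing adjacent vertices is permitted for weak endomorphisms but forbidden for endomorphisms, which is exactly what separates $\le 1$ from $=1$.
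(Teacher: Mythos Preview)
Your proposal is correct and matches the paper's approach exactly: the authors state the proposition immediately after the observation that $\{u,v\}$ is an edge of $P_n$ iff $|u-v|=1$, preceded by ``Hence, clearly'', and give no further proof. Your argument simply makes explicit the unwinding of the (weak) endomorphism definitions along consecutive pairs that the paper leaves implicit.
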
 

\medskip

We also have the following property of $\wEnd P_n$. 

\begin{proposition}\label{interval} 
Let $\alpha\in\wEnd P_n$. If $I$ is an interval of $\{1,\ldots,n\}$ (with the usual order) 
then $I\alpha$ also is an interval of $\{1,\ldots,n\}$. In particular,  
$\im(\alpha)$ is an interval of $\{1,\ldots,n\}$. 
\end{proposition}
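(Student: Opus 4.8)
The plan is to prove this by establishing the key local fact first, namely that $\alpha$ maps consecutive integers to values differing by at most $1$ (which is exactly Proposition~\ref{form}(1)), and then bootstrapping this into a statement about whole intervals via the intermediate value property. First I would recall that $I$ being an interval of $\{1,\ldots,n\}$ means $I=\{a,a+1,\ldots,b\}$ for some $a\le b$, and that a subset $J$ of $\{1,\ldots,n\}$ is an interval if and only if it contains every integer lying between two of its elements, i.e. $x,z\in J$ and $x\le y\le z$ imply $y\in J$. So the goal reduces to showing that $I\alpha$ has this ``no gaps'' property.

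The main step is a discrete intermediate value argument. Let $y$ be any integer with $\min(I\alpha)\le y\le \max(I\alpha)$; I want to produce some $i\in I$ with $i\alpha=y$. Choose $p,q\in I$ with $p\alpha=\min(I\alpha)$ and $q\alpha=\max(I\alpha)$, so that $p\alpha\le y\le q\alpha$. Since $I$ is an interval, all integers between $p$ and $q$ lie in $I$, and by Proposition~\ref{form}(1) the sequence of values $\alpha$ takes along the consecutive integers from $p$ to $q$ changes by at most $1$ at each step. A sequence of integers starting at $p\alpha\le y$, ending at $q\alpha\ge y$, and moving by steps of size at most $1$ must hit $y$ at some point; that point is the desired $i\in I$ with $i\alpha=y$. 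Hence every integer between the minimum and maximum of $I\alpha$ is attained, so $I\alpha$ is an interval.

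The final clause follows immediately by taking $I=\{1,\ldots,n\}$, which is itself an interval, so that $\im(\alpha)=I\alpha$ is an interval.

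I expect the only real point requiring care to be the discrete intermediate value claim: one must make sure the step-by-step argument is phrased cleanly, for instance by considering the smallest index $i$ (between $p$ and $q$, taken in whichever order) for which the value has reached or passed $y$ and using that a jump of size at most $1$ cannot skip over the integer $y$. Everything else is routine once the ``no gaps'' characterization of intervals is in hand, and Proposition~\ref{form}(1) supplies exactly the Lipschitz-type bound that drives the argument.
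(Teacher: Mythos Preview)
Your proof is correct and is genuinely different from the paper's. You argue directly via a discrete intermediate value principle: using Proposition~\ref{form}(1), the values of $\alpha$ along the consecutive integers from $p$ to $q$ move by at most $1$ at each step, so they cannot skip any integer between $p\alpha$ and $q\alpha$. The paper instead argues by contradiction: assuming $I\alpha$ has a gap $a+1,\ldots,a+t-1$ with $t\ge2$, it composes $\alpha$ on both sides with explicit ``clamping'' weak endomorphisms $\beta_1,\beta_2$ to produce a $\gamma\in\wEnd P_n$ whose image is exactly $\{a,a+t\}$, and then locates two adjacent vertices sent to $a$ and $a+t$, contradicting $|a-(a+t)|\le1$. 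Your approach is shorter and more elementary, needing nothing beyond Proposition~\ref{form}(1) and a one-line IVT argument; the paper's approach, while more roundabout, has the minor structural advantage of staying entirely inside the monoid (it exploits closure under composition rather than reasoning about integer sequences), but for this statement that buys nothing extra.
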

\begin{proof}
Let $\alpha\in\wEnd P_n$. Let $I=[u,v]$, with $1\le u\le v\le n$. 

If $|I\alpha|=1$ then, trivially, $I\alpha$ is an interval of $\{1,\ldots,n\}$. 

Now, suppose that $|I\alpha|\ge2$ and, in order to obtain a contradiction, 
admit that $I\alpha$ is not an interval of $\{1,\ldots,n\}$. 
Then, there exists $a\in I\alpha$ such that $a+1,\ldots,a+t-1\not\in I\alpha$ and $a+t\in I\alpha$, for some $t\ge2$. 
Take 
$$
\beta_1=\left(
\begin{array}{cccccccccc}
1&\cdots&u&u+1&\cdots&v-1&v&\cdots&n\\
u&\cdots&u&u+1&\cdots&v-1&v&\cdots&v
\end{array}
\right)
$$
and
$$
\beta_2=\left(
\begin{array}{cccccccccc}
1&\cdots&a&a+1&\cdots&a+t-1&a+t&\cdots&n\\
a&\cdots&a&a+1&\cdots&a+t-1&a+t&\cdots&a+t
\end{array}
\right). 
$$    
Clearly, $\beta_1,\beta_2\in\wEnd P_n$ and $\im(\beta_1\alpha\beta_2)=\{a,a+t\}$. Let $\gamma=\beta_1\alpha\beta_2$. 
Hence $\gamma\in\wEnd P_n$. 
Let $i=\min(a\gamma^{-1})$ and $k$ be the largest integer such that $\{i,\ldots,i+k\}\subseteq a\gamma^{-1}$. 
Then, as  $\im(\gamma)=\{a,a+t\}$, we have $i>1$ or $i+k<n$. 
If $i>1$ then $(i-1)\gamma=a+t$ and so $\{a,a+t\}=\{i\gamma,(i-1)\gamma\}$ is an edge of $P_n$, which is a contradiction. 
Then $i+k<n$, whence $(i+k+1)\gamma=a+t$ and so $\{a,a+t\}=\{(i+k)\gamma,(i+k+1)\gamma\}$ is an edge of $P_n$, which again is a contradiction. 
Thus, $I\alpha$ must be an interval of $\{1,\ldots,n\}$, as required. 
\end{proof} 

\medskip 

Next, we consider strong endomorphisms. 

\smallskip 

Let $\alpha\in\Aut P_n$. Let $i=1\alpha$. Then $2\alpha\in\{i-1,i+1\}$. Since $\alpha$ is a permutation of $\{1,\ldots,n\}$, 
if $2\alpha=i+1$ then $3\alpha=i+2, \ldots, (n-i+1)\alpha=n$ and so $n-i+1=n$ 
(otherwise $(n-i+2)\alpha=n-1=(n-i)\alpha$, which is a contradiction), i.e. $i=1$, 
whence  $\alpha={1\cdots n\choose 1\cdots n}$. 
On the other hand, 
if $2\alpha=i-1$ then $3\alpha=i-2, \ldots, i\alpha=1$ and so $i=n$ (otherwise $(i+1)\alpha=2=(i-1)\alpha$, which is a contradiction), 
whence  $\alpha={1\cdots n\choose n\cdots 1}$. 
Thus, we proved: 

\begin{theorem}\label{aut} 
$\Aut P_n=\{ {1\cdots n\choose 1\cdots n},{1\cdots n\choose n\cdots 1}\}$ {\em(}a cyclic group of order two, for $n\ge2${\em)}.
\end{theorem}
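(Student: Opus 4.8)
The plan is to exploit the combinatorial characterization of endomorphisms provided by Proposition~\ref{form}. Since an automorphism of $P_n$ is in particular a bijective endomorphism, any $\alpha\in\Aut P_n$ is a permutation of $\{1,\ldots,n\}$ satisfying $|i\alpha-(i+1)\alpha|=1$ for every $i\in\{1,\ldots,n-1\}$. Thus the sequence $1\alpha,2\alpha,\ldots,n\alpha$ is a walk on $\{1,\ldots,n\}$ whose consecutive terms always differ by exactly $1$ and which, by bijectivity, visits each vertex precisely once.

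The key step I would carry out is to show that such a walk must be strictly monotone. Suppose the walk changed direction at some index $k$ with $1<k<n$; then $(k-1)\alpha$ and $(k+1)\alpha$ would both equal $k\alpha+1$ or both equal $k\alpha-1$, forcing $(k-1)\alpha=(k+1)\alpha$ and contradicting injectivity. Hence every step is $+1$ or every step is $-1$, so $\alpha$ is either strictly increasing or strictly decreasing.

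It then remains to pin down the endpoints. Writing $i=1\alpha$, in the increasing case the image is $\{i,i+1,\ldots,i+n-1\}$, which coincides with $\{1,\ldots,n\}$ only for $i=1$, yielding the identity $\binom{1\cdots n}{1\cdots n}$; in the decreasing case the image is $\{i,i-1,\ldots,i-n+1\}$, forcing $i=n$ and giving the reversal $\binom{1\cdots n}{n\cdots1}$. Both maps are readily checked to lie in $\Aut P_n$ — the reversal being an involution that clearly preserves and reflects adjacency — so equality holds. Finally, for $n\ge2$ the reversal is distinct from the identity and squares to it, whence $\Aut P_n$ is cyclic of order two.

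I expect the only genuinely delicate point to be the monotonicity argument, and specifically the careful use of injectivity at a putative turning point; once monotonicity is established, the identification of the two maps and the boundary analysis are routine. An alternative, essentially equivalent, route would be to argue directly as in the discussion preceding the statement: fix $i=1\alpha$, split on whether $2\alpha=i+1$ or $2\alpha=i-1$, and propagate the forced values along the path until the bijectivity constraint determines $i$.
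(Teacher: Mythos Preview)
Your proof is correct and essentially the same as the paper's: both exploit that $|i\alpha-(i+1)\alpha|=1$ together with injectivity to rule out any direction change, then use bijectivity to determine $1\alpha$. The paper in fact organizes the argument exactly as in your final paragraph's ``alternative route'' --- fixing $i=1\alpha$, splitting on $2\alpha=i\pm1$, and propagating until hitting a boundary forces $i\in\{1,n\}$ --- which is just a different packaging of your monotonicity observation.
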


For $n\ne3$, it is easy to check that the relation $R_{P_n}$ is the identity.
Hence, as an immediate consequence of \cite[Proposition 1.7.15]{Knauer:2011}, we have:

\begin{theorem}
$\sEnd P_n=\Aut P_n$, for $n\ne3$.
\end{theorem}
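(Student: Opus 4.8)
The plan is to prove the two inclusions, the nontrivial one being $\sEnd P_n\subseteq\Aut P_n$, since $\Aut P_n\subseteq\sEnd P_n$ holds for every graph. Because an injective self-map of the finite vertex set $\{1,\ldots,n\}$ is automatically bijective, and a bijective endomorphism of a finite graph is an automorphism (as recorded in the Preliminaries), it suffices to show that every $\alpha\in\sEnd P_n$ is injective. My strategy is to link the fibres of a strong endomorphism to the relation $R_{P_n}$ and then exploit the fact that, for $n\ne3$, this relation is the identity.

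First I would verify the auxiliary claim that $R_{P_n}$ is the identity for $n\ne3$. This is a direct computation of neighbourhoods: $N(1)=\{2\}$, $N(n)=\{n-1\}$, and $N(i)=\{i-1,i+1\}$ for $1<i<n$. Comparing two distinct vertices, two interior vertices $i<j$ can never satisfy $\{i-1,i+1\}=\{j-1,j+1\}$; an interior vertex has a two-element neighbourhood and so cannot match the one-element neighbourhood of an endpoint; and the two endpoints satisfy $N(1)=N(n)$ exactly when $2=n-1$, i.e. $n=3$. Hence for $n\ne3$ distinct vertices have distinct neighbourhoods and $R_{P_n}$ is the identity (the degenerate cases $n=1,2$ being clear).

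The key step linking strong endomorphisms to $R_{P_n}$ is the content of the cited \cite[Proposition 1.7.15]{Knauer:2011}, and I would also argue it directly: if $\alpha\in\sEnd P_n$ and $u\alpha=v\alpha$, then for every vertex $w$ the strong condition gives $\{u,w\}\in E \iff \{u\alpha,w\alpha\}\in E \iff \{v\alpha,w\alpha\}\in E \iff \{v,w\}\in E$, so $N(u)=N(v)$ and $(u,v)\in R_{P_n}$. In other words, each fibre of $\alpha$ is contained in an $R_{P_n}$-class. Since $R_{P_n}$ is the identity for $n\ne3$, every fibre is a singleton, so $\alpha$ is injective, hence bijective, hence an automorphism, completing the inclusion $\sEnd P_n\subseteq\Aut P_n$.

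The hard part is not any single estimate but making the neighbourhood case analysis airtight, in particular isolating $n=3$ as the unique exception and invoking the structural fact in the correct direction (fibres refine $R_{P_n}$-classes, not the reverse). Once triviality of $R_{P_n}$ is established, the passage to automorphisms is routine.
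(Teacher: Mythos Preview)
Your proof is correct and follows essentially the same approach as the paper: verify that $R_{P_n}$ is the identity for $n\ne3$ and then invoke (or, as you do, reprove) the content of \cite[Proposition 1.7.15]{Knauer:2011} to conclude that every strong endomorphism is injective, hence an automorphism. The paper merely cites the proposition and asserts the triviality of $R_{P_n}$, whereas you spell out both steps explicitly, but the strategy is identical.
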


Observe that  
$\sEnd P_3=\{{123\choose123},{123\choose321},{123\choose121},{123\choose212},{123\choose232},{123\choose323}\}$, 
whence $\Aut P_3\subsetneq\sEnd P_3$. 

\medskip 

For strong weak endomorphisms of $P_n$, we have: 

\begin{theorem}
$\swEnd P_n=\{ {1\cdots n\choose 1\cdots n},{1\cdots n\choose n\cdots 1},{1\cdots n\choose 1\cdots 1},\ldots,{1\cdots n\choose n\cdots n}\}$ 
{\em(}the automorphisms together with the constants{\em)}, for $n\ne3$.
\end{theorem}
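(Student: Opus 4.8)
The plan is to prove the two inclusions separately, the easy one being that every constant map and every automorphism lies in $\swEnd P_n$. For a constant $\alpha$ both sides of the defining biconditional fail for every pair $u,v$: an edge joins two distinct vertices, so $\{u\alpha,v\alpha\}$ is never an edge, while $u\alpha\ne v\alpha$ never holds. Moreover any strong endomorphism, in particular each of the two automorphisms supplied by Theorem~\ref{aut}, is a strong weak endomorphism, since adjacency of the images already forces the images to be distinct. All the work is therefore in the reverse inclusion: showing that a non-constant $\alpha\in\swEnd P_n$ must be $\binom{1\cdots n}{1\cdots n}$ or $\binom{1\cdots n}{n\cdots 1}$.

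First I would translate the definition into an arithmetic statement about $a_i:=i\alpha$. Since $\{u\alpha,v\alpha\}$ is an edge exactly when $|u\alpha-v\alpha|=1$, the strong weak condition reads $|i\alpha-j\alpha|=1\iff(|i-j|=1\text{ and }i\alpha\ne j\alpha)$. Its forward (weak) half is Proposition~\ref{form}(1), giving $|a_{i+1}-a_i|\le1$. The decisive ingredient is the contrapositive of the reverse half: \emph{if $|i-j|\ge2$, then $a_i=a_j$ or $|a_i-a_j|\ge2$}; equivalently, two vertices at distance at least $2$ never have images differing by exactly $1$.

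Next I would study the consecutive differences $d_i:=a_{i+1}-a_i\in\{-1,0,1\}$. Applying the distance-$\ge2$ principle to the pair $(i,i+2)$ gives $|d_i+d_{i+1}|\ne1$, which for $d_i,d_{i+1}\in\{-1,0,1\}$ forces $d_i$ and $d_{i+1}$ to be both zero or both nonzero; hence either $\alpha$ is constant, or every $d_i\in\{-1,1\}$ (so that in fact $\alpha\in\End P_n$ by Proposition~\ref{form}(2)). Assuming $\alpha$ non-constant, I would then apply the same principle to the pair $(i,i+3)$: the sum $d_i+d_{i+1}+d_{i+2}$ is an odd integer in $\{-3,-1,1,3\}$ whose absolute value may not equal $1$, so it is $\pm3$ and the three signs coincide. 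Telescoping these overlapping length-three windows shows all $d_i$ share a single value $\varepsilon\in\{-1,1\}$, whence $a_i=a_1+(i-1)\varepsilon$; the constraint $\{a_1,\ldots,a_n\}\subseteq\{1,\ldots,n\}$ then pins down $a_1=1$ (the identity) or $a_1=n$ (the reversal).

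I expect the only genuine obstacle to be the exclusion of turning points, which is exactly where the hypothesis $n\ne3$ enters: the step ruling out a change of direction needs a pair of vertices at distance $3$, and $P_3$ has none, so the argument really does break there (indeed $\binom{123}{121}\in\swEnd P_3$ is neither an automorphism nor a constant). The small cases $n=1,2$ admit no interior turning point at all and are immediate, so the telescoping over length-three windows is only invoked for $n\ge4$; the endpoint behaviour of those windows then causes no difficulty.
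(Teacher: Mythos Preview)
Your argument is correct and takes a genuinely different route from the paper's. The paper works set-theoretically: starting from a non-automorphism $\alpha$, it picks $i<j$ with $i\alpha=j\alpha$ and shows, via repeated use of the reverse implication in the strong-weak condition, that the initial segment $\{1,\ldots,i\}$ and the final segment $\{j,\ldots,n\}$ are both collapsed to $\{i\alpha\}$; a maximality argument on $i$ then forces $\alpha$ to be constant, with the hypothesis $n\ge 4$ entering only at the very end to exclude the edge $\{2,n\}$. Your approach instead encodes $\alpha$ by its step sequence $d_i\in\{-1,0,1\}$ and reads the strong-weak condition as forbidding $|a_i-a_j|=1$ whenever $|i-j|\ge 2$; the distance-$2$ constraint dichotomises into ``all $d_i=0$'' versus ``all $d_i\ne 0$'', and the distance-$3$ constraint (available precisely when $n\ge 4$) then rules out sign changes. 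Your method is arguably more transparent about \emph{where} $n\ne 3$ is used---the absence of a distance-$3$ pair in $P_3$ is exactly what allows the turning points in $\binom{123}{121}$---and it yields the stronger intermediate fact that a non-constant strong weak endomorphism is already an endomorphism. The paper's argument, by contrast, never needs to look three steps ahead and would adapt more readily to graphs lacking the arithmetic structure of a path.
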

\begin{proof}
The equality is obvious for $n\le 2$. Then, suppose that $n\ge4$ and 
let $\alpha\in\swEnd P_n\setminus\Aut P_n$. 

Let $i,j\in\{1,\ldots,n\}$ be such that $i<j$ and $i\alpha=j\alpha$ (notice that, as  $\alpha\in\swEnd P_n\setminus\Aut P_n$, 
such a pair of integers always exists). 
Let $k$ be the largest integer such that $0\le k\le i-1$ and $\{i-k,\ldots,i\}\alpha=\{i\alpha\}$. 
If $k<i-1$ then $i-k-1\ge1$ and $(i-k-1)\alpha=i\alpha\pm1=j\alpha\pm1$, whence $\{(i-k-1)\alpha,j\alpha\}$ is an edge of $P_n$ and so 
$\{i-k-1,j\}$ is also an edge of $P_n$, which is a contradiction since $i-k-1<j-1$. 
Hence, $k=i-1$ and so $\{1,\ldots,i\}\alpha=\{i\alpha\}$. A similar reasoning also allow us to deduce that $\{j,\ldots,n\}\alpha=\{i\alpha\}$. 

Now, take the largest integer $i$ such that there exist an integer $j$ such that $1\le i<j\le n$ and $i\alpha=j\alpha$. Then, we have 
$\{1,\ldots,i\}\alpha=\{i\alpha\}=\{j,\ldots,n\}\alpha$. 

If $i>1$ then we get $1\le i-1<i$ and $(i-1)\alpha=i\alpha$, whence $\{1,\ldots,i-1\}\alpha=\{i\alpha\}=\{i,\ldots,n\}\alpha$ 
and so $\alpha$ is a constant transformation. 

Thus, suppose that $i=1$. 
If $j>2$ then, given the choice of $i$, we must have $2\alpha=1\alpha\pm1=n\alpha\pm1$, whence $\{2\alpha,n\alpha\}$ is an edge of $P_n$ and so 
$\{2,n\}$ is also an edge of $P_n$, which is a contradiction since $n\ge4$. 
Therefore, $j=2$ and so $\alpha$ is also a constant transformation, as required. 
\end{proof} 

Notice that $\swEnd P_3=\{{123\choose123},{123\choose321},{123\choose121},{123\choose212},{123\choose232},{123\choose323},{123\choose111},{123\choose222},{123\choose333}\}$. 

\medskip 

It is a routine matter to show that $\swEnd P_3$ is generated by $\left\{{123\choose321},{123\choose212},{123\choose111}\right\}$, 
and $\swEnd P_n$ is generated by $\left\{{1\cdots n\choose n\cdots 1},{1\cdots n\choose 1\cdots 1},\ldots,
{1\;\;\cdots\;\;n\choose \lceil\frac{n}{2}\rceil\cdots \lceil\frac{n}{2}\rceil}\right\}$, 
for $n\ne3$. Furthermore, it is easy to deduce that these sets of generators have minimal cardinality. Hence, we obtain the following result.

\begin{theorem}
For $n\ge2$, $\swEnd P_n$ has rank $\lceil\frac{n}{2}\rceil+1$.
\end{theorem}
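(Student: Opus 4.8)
My plan is to treat the two regimes $n\neq 3$ and $n=3$ separately, in each case proving the upper bound (the displayed set generates) and the matching lower bound (no smaller set can). Throughout I write $\rho={1\cdots n\choose n\cdots 1}$ for the reversal automorphism and $c_k$ for the constant transformation with image $\{k\}$, and I record the elementary composition rules $\alpha c_k=c_k$ and $c_k\beta=c_{k\beta}$, valid for all $\alpha,\beta\in\swEnd P_n$; in particular $c_k\rho=c_{n+1-k}$ and $\rho^2={1\cdots n\choose 1\cdots n}$.

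For $n\neq 3$ I would first verify generation. By the structural description of $\swEnd P_n$ just obtained, its elements are the two automorphisms and the $n$ constants. Applying $\rho^2={1\cdots n\choose 1\cdots n}$ recovers the identity, while $c_k\rho=c_{n+1-k}$ sends each of $c_1,\ldots,c_{\lceil n/2\rceil}$ to its mirror image; since the pairs $\{k,n+1-k\}$ exhaust $\{1,\ldots,n\}$, every constant is reached. Hence the displayed set generates, giving rank $\le\lceil n/2\rceil+1$. For the lower bound I would exploit that the constants form a two-sided ideal $C$ (indeed $\alpha c_k=c_k$ and $c_k\beta=c_{k\beta}$ both lie in $C$); consequently any product involving a constant is itself a constant, so a product is an automorphism if and only if all its factors are automorphisms. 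As $\rho$ is the unique non-identity automorphism, every generating set must contain $\rho$. Next, given an expression $c_k=g_1\cdots g_m$, let $g_j=c_\ell$ be its last constant factor; then $g_1\cdots g_j=c_\ell$ and $\phi:=g_{j+1}\cdots g_m\in\{\mathrm{id},\rho\}$, so $c_k=c_{\ell\phi}$ forces $\ell\in\{k,n+1-k\}$. Thus the values of the constant generators must meet every class $\{k,n+1-k\}$, and as there are exactly $\lceil n/2\rceil$ such classes the set needs at least $\lceil n/2\rceil$ constants. Together with $\rho$ this gives rank $\ge\lceil n/2\rceil+1$, settling the case $n\neq 3$.

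The case $n=3$, where $\swEnd P_3$ also contains the four ``folding'' maps of image size $2$, is the one I expect to be the main obstacle, and I would dispatch it by a finite analysis stratified by image size. Generation by $\left\{{123\choose321},{123\choose212},{123\choose111}\right\}$ is checked by exhibiting a short word for each of the nine elements (for instance ${123\choose121}={123\choose212}^2$ and ${123\choose222}={123\choose111}{123\choose212}$). For minimality I would use $\rank(\alpha\beta)\le\min\{\rank\alpha,\rank\beta\}$: the rank-$3$ element ${123\choose321}$ forces an automorphism generator, and a rank-$2$ element can arise only from a product all of whose factors have rank $\ge2$, hence forces a rank-$2$ generator. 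Finally a short check shows that $\rho$ together with a single further generator never attains all three image sizes: with one rank-$2$ generator one stays in rank $\ge 2$, since the images $\{1,2\}$ and $\{2,3\}$ of the rank-$2$ maps are never collapsed to a point by $\rho$ or by any rank-$2$ map, whereas with one constant one obtains only ranks $1$ and $3$. Thus no two generators suffice, and the rank equals $3=\lceil 3/2\rceil+1$, which together with the previous case completes the proof for all $n\ge 2$.
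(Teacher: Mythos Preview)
Your proof is correct and uses exactly the generating sets the paper displays; where the paper merely asserts that generation is ``a routine matter'' and minimality is ``easy to deduce'', you supply the details via the two-sided ideal structure of the constants and the last-constant-factor argument (for $n\neq 3$) and a rank-stratified case analysis showing two generators cannot hit all three image sizes (for $n=3$). The approaches are therefore essentially the same, with your version being the fleshed-out one.
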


\subsection*{Regularity}

Recall that an element $s$ of a semigroup $S$ is called \textit{regular} if there exists $x\in S$ such that $s=sxs$. 
A semigroup is said to be \textit{regular} if all its elements are regular. 

Since $\Aut G$ is a group, for any graph $G$, then it is, trivially, a regular monoid. By the above properties and observations, 
it is clear that $\sEnd P_n$ and $\swEnd P_n$ are also regular monoids. Regarding $\End P_n$ and $\wEnd P_n$, we have:

\begin{proposition}\label{reg}
Let $\alpha\in\wEnd P_n$ {\em[}respectively, $\alpha\in\End P_n${\em]}. Then $\alpha$ is regular in $\wEnd P_n$ {\em[}respectively, in $\End P_n${\em]} if and only if there exists an interval $I$ of $\{1,\ldots,n\}$ {\em(}with the usual order{\em)} such that $I\alpha=\im(\alpha)$ 
and $|I|=|\im(\alpha)|$.  
\end{proposition}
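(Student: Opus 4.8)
The plan is to reduce regularity to a statement about sections and then exploit Proposition \ref{interval}. First I would record the standard reformulation: writing composition left-to-right as in the paper, the equation $\alpha=\alpha\beta\alpha$ holds if and only if $y\beta\alpha=y$ for every $y\in\im(\alpha)$, i.e.\ $\beta$ restricted to $\im(\alpha)$ is a \emph{section} of $\alpha$ (it selects, for each $y\in\im(\alpha)$, some element of $y\alpha^{-1}$). Thus $\alpha$ is regular in $\wEnd P_n$ (resp.\ $\End P_n$) precisely when some such section extends to a full weak endomorphism (resp.\ endomorphism) of $P_n$. I would also note that any section is automatically injective on $\im(\alpha)$, since $y_1\beta=y_2\beta$ forces $y_1=y_1\beta\alpha=y_2\beta\alpha=y_2$.

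For the direct implication, assume $\alpha=\alpha\beta\alpha$ with $\beta$ in the relevant monoid, and set $I=(\im(\alpha))\beta$. By Proposition \ref{interval}, $\im(\alpha)$ is an interval, and since $\beta$ is a weak endomorphism its image $I$ of that interval is again an interval. Because $\beta$ is injective on $\im(\alpha)$ we get $|I|=|\im(\alpha)|$, and $I\alpha=(\im(\alpha))\beta\alpha=\im(\alpha)$ since $\beta\alpha$ fixes $\im(\alpha)$ pointwise. Hence $I$ is the required interval.

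For the converse, suppose $I=[u,v]$ is an interval with $I\alpha=\im(\alpha)$ and $|I|=|\im(\alpha)|$, so that $\alpha|_I$ is a bijection from $I$ onto the interval $\im(\alpha)=[c,d]$. I would first observe that $\alpha|_I$ must be strictly monotone: its successive values differ by at most $1$ yet are pairwise distinct, so they differ by exactly $1$, and a $\pm1$-step sequence visiting each point of an interval exactly once cannot have an interior local extremum without repeating a value. Consequently the inverse bijection $\gamma=(\alpha|_I)^{-1}\colon\im(\alpha)\to I$ is strictly monotone with unit steps. I would then extend $\gamma$ to a transformation $\beta$ of $\{1,\dots,n\}$: in the weak case, keep $\beta$ constant equal to $c\gamma$ on $\{1,\dots,c-1\}$ and equal to $d\gamma$ on $\{d+1,\dots,n\}$; in the endomorphism case, continue $\gamma$ past both ends of $[c,d]$ by a zig-zag taking a $\pm1$ step each time and reflecting off the vertices $1$ and $n$. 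In either case $\beta$ lies in the correct monoid by Proposition \ref{form}, and since $\beta$ agrees with $\gamma$ on $\im(\alpha)$ we obtain $x\alpha\beta\alpha=(x\alpha)\gamma\alpha=x\alpha$ for all $x$, i.e.\ $\alpha=\alpha\beta\alpha$.

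The main obstacle is the endomorphism case of this extension: the constant padding that works for weak endomorphisms is illegal there (it violates the unit-step condition), so I must produce a genuine $\pm1$-step extension of $\gamma$ that never leaves $\{1,\dots,n\}$. The reflecting zig-zag achieves this, but I would need to verify carefully that bouncing off the endpoints $1$ and $n$ keeps every consecutive difference equal to $1$ and that such an extension is available for all relevant $n$, the degenerate small cases being checked directly. Everything else — the monotonicity of $\alpha|_I$, the interval and cardinality bookkeeping, and the verification $\alpha\beta\alpha=\alpha$ — is routine once the reformulation via sections is in place.
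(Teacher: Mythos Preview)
Your proposal is correct and follows essentially the same architecture as the paper: in both directions the interval in question is $(\im\alpha)\beta=\im(\alpha\beta)$, and the converse is handled by inverting $\alpha|_I$ and extending it to a full transformation. The one noteworthy difference is in the forward cardinality step: the paper appeals to Green's relations (from $\alpha\,\mathcal{R}\,\alpha\beta$ it deduces $\alpha\,\mathcal{J}\,\alpha\beta$ and hence equal image sizes), whereas you argue more directly that any section is injective on $\im(\alpha)$, which is cleaner and self-contained. For the converse the paper writes down explicit parity-based zig-zags that oscillate between two adjacent values of $I$ (so $\beta$ actually lands in $\End P_n$ in both cases), while you split into constant padding for $\wEnd P_n$ and a reflecting walk for $\End P_n$; both work, and the paper's choice of oscillating inside $I$ rather than reflecting off $1$ and $n$ makes the ``never leaves $\{1,\dots,n\}$'' verification automatic.
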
 
\begin{proof}
First, we suppose that $\alpha$ is regular in $\wEnd P_n$ [respectively, in $\End P_n$]. 
Then, there exists $\beta\in \wEnd P_n$ [respectively, $\beta\in\End P_n$] such that $\alpha=\alpha\beta\alpha$. 
Let $I=\im(\alpha\beta)$. Then, by Proposition \ref{interval}, $I$ is an interval of $\{1,\ldots,n\}$. 
Moreover, $I\alpha=(\im(\alpha\beta))\alpha=\im(\alpha\beta\alpha)=\im(\alpha)$. 
On the other hand, since $\alpha$ and $\alpha\beta$ are $\mathcal{R}$-related, then $\alpha$ and $\alpha\beta$ are $\mathcal{J}$-related, 
whence $|\im(\alpha)|=|\im(\alpha\beta)|$ and so $|I|=|\im(\alpha)|$. 

Conversely, admit that there exists an interval $I$ of $\{1,\ldots,n\}$ such that $I\alpha=\im(\alpha)$ 
and $|I|=|\im(\alpha)|$. 
If $|I|=1$ then $\alpha$ is a constant transformation (this case does not occur if $\alpha\in\End P_n$), 
whence $\alpha$ is an idempotent and so $\alpha$ is a regular element (of $\wEnd P_n$). Thus, suppose that $|I|\ge2$. 
Then $I=\{i,\ldots,j\}$, for some $1\le i< j\le n$, and the restriction of the transformation $\alpha$ to $I$ is injective. 
Hence, we have $i\alpha<\cdots<j\alpha$ or $i\alpha>\cdots>j\alpha$ (by a reasoning similar to the proof of Theorem \ref{aut}). 
Let $\beta$ be the transformation of $\{1,\ldots,n\}$ defined as follows. 
\begin{enumerate}
\item The restriction of $\beta$ to $\im(\alpha)$ is the inverse of the restriction of $\alpha$ to $I$, i.e.
$
\beta|_{\im(\alpha)} = \left(
\begin{array}{ccc}
i\alpha&\cdots&j\alpha\\
i& \cdots & j 
\end{array}
\right). 
$
\item Suppose $i\alpha<\cdots<j\alpha$ and let $\im(\alpha)^-=\{1,\ldots,i\alpha-1\}$ and $\im(\alpha)^+=\{j\alpha+1,\ldots,n\}$. 
\begin{enumerate}
\item If $i\alpha$ is odd (and $i\alpha\ge3$) then 
$
\beta|_{\im(\alpha)^-}=\left(
\begin{array}{ccccc}
1&2&\cdots&i\alpha-2&i\alpha-1\\
i&i+1&\cdots&i&i+1
\end{array}
\right). 
$
\item If $i\alpha$ is even then 
$
\beta|_{\im(\alpha)^-}=\left(
\begin{array}{ccccc}
1&2&\cdots&i\alpha-2&i\alpha-1\\
i+1&i&\cdots&i&i+1
\end{array}
\right). 
$
\item If $n-j\alpha$ is odd then 
$
\beta|_{\im(\alpha)^+}=\left(
\begin{array}{ccccc}
j\alpha+1&j\alpha+2&\cdots&n-1&n\\
j-1&j&\cdots&j&j-1
\end{array}
\right). 
$
\item If $n-j\alpha$ is even (and $n-j\alpha\ge2$) then
$
\beta|_{\im(\alpha)^+}=\left(
\begin{array}{ccccc}
j\alpha+1&j\alpha+2&\cdots&n-1&n\\
j-1&j&\cdots&j-1&j
\end{array}
\right). 
$
\end{enumerate}

\item Suppose $i\alpha>\cdots>j\alpha$ and let $\im(\alpha)^-=\{1,\ldots,j\alpha-1\}$ and $\im(\alpha)^+=\{i\alpha+1,\ldots,n\}$. 
\begin{enumerate}
\item If $j\alpha$ is odd (and $j\alpha\ge3$) then 
$
\beta|_{\im(\alpha)^-}=\left(
\begin{array}{ccccc}
1&2&\cdots&j\alpha-2&j\alpha-1\\
j&j-1&\cdots&j&j-1
\end{array}
\right). 
$
\item If $j\alpha$ is even then 
$
\beta|_{\im(\alpha)^-}=\left(
\begin{array}{ccccc}
1&2&\cdots&j\alpha-2&j\alpha-1\\
j-1&j&\cdots&j&j-1
\end{array}
\right). 
$
\item If $n-i\alpha$ is odd then 
$
\beta|_{\im(\alpha)^+}=\left(
\begin{array}{ccccc}
i\alpha+1&i\alpha+2&\cdots&n-1&n\\
i+1&i&\cdots&i&i+1
\end{array}
\right). 
$
\item If $n-i\alpha$ is even (and $n-i\alpha\ge2$) then
$
\beta|_{\im(\alpha)^+}=\left(
\begin{array}{ccccc}
i\alpha+1&i\alpha+2&\cdots&n-1&n\\
i+1&i&\cdots&i+1&i
\end{array}
\right). 
$
\end{enumerate}
\end{enumerate} 
It is clear that $\beta\in \End P_n$ (and so $\beta\in\wEnd P_n$) and $\alpha=\alpha\beta\alpha$. 
Hence, $\alpha$ is regular, as required. 
\end{proof}

It is a routine matter to check that $\End P_n$ is regular for $n\le5$, and that $\wEnd P_n$ is regular for $n\le3$. 
On the other hand, by Proposition \ref{reg}, it is clear that 
$$
\left(
\begin{array}{cccccccc}
1&2&3&4&5&6&\cdots&n\\
1&2&3&2&3&4&\cdots&n-2
\end{array}
\right)
$$
is not a regular element of $\End P_n$ for $n\ge6$,  and 
$$
\left(
\begin{array}{cccccc}
1&2&3&4&\cdots&n\\
1&2&2&3&\cdots&n-1
\end{array}
\right)
$$
is not a regular element of $\wEnd P_n$ for $n\ge4$. Thus, we have: 

\begin{corollary}
The monoid $\wEnd P_n$ {\em[}respectively, $\End P_n${\em]} is regular if and only if $n\le3$ {\em[}respectively, $n\le5${\em]}.
\end{corollary}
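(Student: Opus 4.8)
The plan is to run everything through the regularity criterion of Proposition \ref{reg}: an element $\alpha$ is regular exactly when some interval $I$ satisfies $I\alpha=\im(\alpha)$ and $|I|=|\im(\alpha)|$. Since $\im(\alpha)$ is itself an interval $\{m,\ldots,M\}$ by Proposition \ref{interval}, these two conditions together say precisely that $\alpha$ restricts to a bijection of $I$ onto $\{m,\ldots,M\}$. Because consecutive images differ by at most $1$ along the interval $I$, such a restriction is forced to be strictly monotone with unit steps, so its range is the whole stretch $\{i\alpha,\ldots,j\alpha\}$. Thus regularity of $\alpha$ is equivalent to the existence of a \emph{monotone run}, i.e. a block of consecutive arguments whose values sweep all of $\{m,\ldots,M\}$ once. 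I would use this reformulation for both directions.

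For the ``only if'' directions I only need the two displayed witnesses. For $n\ge4$ the given weak endomorphism has image $\{1,\ldots,n-1\}$, and the only intervals of length $n-1$ are $\{1,\ldots,n-1\}$ and $\{2,\ldots,n\}$; each contains the two arguments mapped to the common value $2$, so $\alpha$ fails to be injective on either and no monotone run sweeps the image. For $n\ge6$ the given endomorphism has image $\{1,\ldots,n-2\}$, and each of the three intervals of length $n-2$, namely $\{1,\ldots,n-2\}$, $\{2,\ldots,n-1\}$ and $\{3,\ldots,n\}$, contains a repeated value (the value $2$, or the value $3$). In every case the criterion of Proposition \ref{reg} is violated, so $\wEnd P_n$ is non-regular for $n\ge4$ and $\End P_n$ is non-regular for $n\ge6$.

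For the ``if'' directions I must show that \emph{every} element is regular in the remaining ranges, which is the substantive half. Here the monotone-run reformulation organizes the argument: an endomorphism is a walk on the path with unit steps and a weak endomorphism a walk with steps in $\{0,\pm1\}$, and the image is the interval $\{m,\ldots,M\}$ between the global minimum and maximum of the walk. In the endomorphism case I would pick positions attaining $m$ and $M$ at minimal distance $d$; a parity-and-length count gives $d\ge M-m$ with $d\equiv M-m \pmod 2$, so either $d=M-m$, which yields a monotone sweep, or $d\ge M-m+2$. Tracing the consequences shows a non-regular walk needs range $M-m\ge3$ together with at least $M-m+3\ge6$ vertices, whence $\End P_n$ is regular for $n\le5$. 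For $\wEnd P_n$ the parity bound breaks down because of the $0$-steps, so for $n\le3$ I would instead verify regularity from the reformulation over the few admissible images directly.

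The main obstacle is exactly this last half: ruling out \emph{all} non-regular elements at the threshold values $n=5$ and $n=3$, since, unlike the counterexamples, a single witness does not suffice. The delicate point is to argue cleanly that a short walk cannot separate its global minimum from its global maximum without some intervening monotone stretch already covering the full range, so that the verification collapses to a handful of borderline configurations rather than a long enumeration.
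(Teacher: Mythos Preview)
Your proposal is correct and uses the same overall framework as the paper: invoke Proposition~\ref{reg}, exhibit the same two explicit non-regular elements for the ``only if'' direction, and handle the ``if'' direction by verifying regularity in the small cases. Your verification that the two displayed maps fail the criterion is more explicit than the paper's, which simply asserts that ``by Proposition~\ref{reg}, it is clear'' that they are not regular.

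Where you genuinely diverge is in the treatment of $\End P_n$ for $n\le 5$. The paper dismisses this as ``a routine matter to check'', i.e.\ a finite enumeration, whereas you propose a structural argument: choose positions realising $m$ and $M$ at minimal distance $d$, use the parity constraint $d\equiv M-m\pmod 2$, and argue that non-regularity forces $d\ge (M-m)+2$ and $M-m\ge 3$, hence $n\ge d+1\ge (M-m)+3\ge 6$. This is a cleaner and more informative route than brute force, and it explains \emph{why} the threshold is exactly $5$. The step ``$M-m\ge 3$'' is the one place your sketch is terse: it does require the short observation that when $M-m\le 2$ the minimal-distance pair is forced down to $d=M-m$ (since the first two steps away from an extremum are constrained), but this is exactly the kind of ``tracing the consequences'' you flag, and it goes through without difficulty. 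For $\wEnd P_n$ with $n\le 3$ both you and the paper fall back on direct inspection, which is unavoidable here.
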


\subsection*{Cardinality}

It is clear that $|\Aut P_1|=|\sEnd P_1|=|\swEnd P_1|=|\End P_1|=|\wEnd P_1|=1$; 
$|\Aut P_n|=|\sEnd P_n|=2$, for $n=2$ and $n\ge4$; 
$|\swEnd P_n|=n+2$, for $n=2$ and $n\ge4$; 
$|\Aut P_3|=2$, $|\sEnd P_3|=6$ and $|\swEnd P_3|=9$. 

A formula for $|\End P_n|$ was given by Arworn \cite{Arworn:2009} in 2009. 
Regarding $|\wEnd P_n|$, we give a formula below. 

First, we recursively define a family $a(r,i)$, with $1\le r\le n-2$ and $1\le i\le n-1$, of integers:  
\begin{itemize}
\item $a(1,1)=a(1,2)=1$;
\item $a(1,p)=0$, for $3\le p\leq n-1$; 
\item For $2\leq k\leq n-2$, 
\begin{description}
\item $a(k,1)=a(k-1,1)+a(k-1,2)$, and 
\item $a(k,p)=a(k-1,p-1)+a(k-1,p)+a(k-1,p+1)$, for $2\leq p\leq n-2$; 
\end{description}
\item $a(k,n-1)=0$, for $2\le k\le n-3$; 
\item $a(n-2,n-1)=1$. 
\end{itemize}
Next,  let  
$
b(r)=2\sum_{i=1}^{n-1}a(r,i), 
$
for $1\leq r\leq n-2$.

Then, we have: 

\begin{theorem}\label{card}
$\displaystyle |\wEnd P_n| = 3^{n-2}(3n-2)-\sum_{r=1}^{n-2}3^{n-r-2}b(r)$. 
\end{theorem}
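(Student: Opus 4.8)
The plan is to read off from Proposition~\ref{form}(1) that a weak endomorphism $\alpha$ of $P_n$ is nothing but a sequence $(1\alpha,\dots,n\alpha)$ of elements of $\{1,\dots,n\}$ in which consecutive entries differ by at most $1$; that is, a walk with $n$ vertices and $n-1$ steps in $\{-1,0,+1\}$, starting in $\{1,\dots,n\}$ and never leaving $\{1,\dots,n\}$. I would count these by embedding them among all such walks on $\mathbb{Z}$: there are exactly $n\cdot 3^{n-1}=3^{n-2}\cdot 3n$ walks on $\mathbb{Z}$ with $n$ vertices, $n-1$ steps in $\{-1,0,+1\}$ and first vertex in $\{1,\dots,n\}$, and I must subtract those that leave $\{1,\dots,n\}$. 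The key elementary observation is that a walk with $n$ vertices and unit steps has range at most $n-1$, so it can never attain both $0$ and $n+1$. Hence a walk that leaves $\{1,\dots,n\}$ reaches exactly one of the forbidden values $0$ or $n+1$, and the reflection $x\mapsto n+1-x$ (which preserves the step structure and fixes $\{1,\dots,n\}$ setwise) is a bijection between the two families. Writing $H$ for the number of such walks reaching $n+1$, this yields $|\wEnd P_n|=n\cdot 3^{n-1}-2H$.

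Next I would compute $H$ by a first-passage decomposition according to the first time $t$ at which the walk attains the value $n+1$. For $2\le t\le n$, let $N_t$ be the number of walks $s_1,\dots,s_t$ with $s_1\in\{1,\dots,n\}$, $s_t=n+1$ and $s_j\le n$ for $j<t$; by the range bound such a prefix in fact stays inside $\{1,\dots,n\}$ until time $t-1$, with $s_{t-1}=n$. Once $n+1$ is first reached, the remaining $n-t$ steps are unconstrained and contribute a factor $3^{\,n-t}$, so $H=\sum_{t=2}^{n}N_t\,3^{\,n-t}$. Since $s_{t-1}=n$ forces the last step, $N_t$ equals the number of walks with $t-1$ vertices inside $\{1,\dots,n\}$ ending at $n$; in particular $N_2=1$, which produces the isolated term $3^{\,n-2}$.

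To match the remaining terms with the recursively defined array, I would show that $a(r,i)$ is exactly the number of walks with $r+1$ vertices (length $r$) inside $\{1,\dots,n\}$ starting at vertex $1$ and ending at vertex $i$. The defining recursion for $a$ is precisely the forward walk recursion issued from $1$: the interior rule $a(k,p)=a(k-1,p-1)+a(k-1,p)+a(k-1,p+1)$ records the three possible previous positions, while $a(k,1)=a(k-1,1)+a(k-1,2)$ encodes the lower wall at $1$; the initial row $a(1,1)=a(1,2)=1$, $a(1,p)=0$ $(p\ge3)$, together with $a(k,n-1)=0$ for $k\le n-3$ and $a(n-2,n-1)=1$, all express reachability (a vertex at distance $d$ from $1$ needs at least $d$ steps, and is reached in exactly $d$ steps only by the monotone walk). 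Because $r\le n-2$ these walks never reach $n$, so the upper wall is invisible and the identification is clean. Reversing a prefix counted by $N_t$ and reflecting by $x\mapsto n+1-x$ turns walks with $t-1$ vertices ending at $n$ into walks with $t-1$ vertices starting at $1$, whence $N_t=\sum_{i=1}^{n-1}a(t-2,i)$ for $t\ge3$. Summing gives $2H=2\cdot3^{\,n-2}+\sum_{r=1}^{n-2}3^{\,n-r-2}b(r)$ with $b(r)=2\sum_i a(r,i)$, and substituting into $|\wEnd P_n|=3^{n-2}\cdot 3n-2H$ yields the stated formula.

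The main obstacle is the bookkeeping in the last step: translating the combinatorial content of $a(r,i)$ into walk counts and checking that the somewhat ad hoc boundary prescriptions—especially the lone value $a(n-2,n-1)=1$ and the vanishing $a(k,n-1)=0$ for $k\le n-3$—are exactly the reachability constraints for walks from vertex $1$, while keeping the two reindexings ($r=t-2$, and the shift between ``number of vertices'' and ``number of steps'') consistent. By contrast, the reduction $|\wEnd P_n|=n\cdot 3^{n-1}-2H$ through the range-and-reflection argument is short, and the first-passage factorization is routine.
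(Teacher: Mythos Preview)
Your argument is correct, and the route differs from the paper's. The paper proceeds by a direct forward recurrence: writing $c(i)$ for the number of admissible prefixes $\alpha|_{\{1,\dots,i+1\}}$, it observes that each step contributes three choices except when the current value lies on the boundary $\{1,n\}$, giving $c(i+1)=3c(i)-b(i)$ with $b(i)$ the number of prefixes ending at the boundary; unrolling this recurrence yields the formula, and $a(k,p)$ is interpreted there as the number of prefixes with $(k+1)\alpha=1$ and $1\alpha=p$. You instead embed the walks in $\mathbb{Z}$, subtract the escapers, and use the range bound (at most $n-1$ for a walk with $n$ vertices) to see that no walk escapes on both sides, so the reflection $x\mapsto n+1-x$ halves the work and no inclusion--exclusion is needed; the first-passage factorisation then produces exactly the same terms, with your interpretation of $a(r,i)$ being the time-reversal of the paper's. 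Your approach makes the factor $3^{n-r-2}$ transparent as the free tail after first escape, and the constant $3n-2$ emerge as $3n-2\cdot N_2$; the paper's approach is more self-contained in that it never leaves $\{1,\dots,n\}$ and does not need the range/reflection observation. The bookkeeping you flag as the main obstacle (matching the boundary prescriptions for $a(k,n-1)$ and the reindexing $r=t-2$) is handled correctly.
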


\begin{proof}
Let $\alpha \in \wEnd P_{n}$. We will mainly use the fact that 
$(i+1)\alpha \in\{i\alpha ,i\alpha +1\}$ if $i\alpha =1$, 
$(i+1)\alpha \in \{i\alpha -1,i\alpha,i\alpha +1\}$ if $i\alpha \in \{2,\ldots ,n-1\}$, 
and $(i+1)\alpha \in\{i\alpha ,i\alpha -1\}$ if $i\alpha =n$, for $i\in \{1,\ldots ,n-1\}$. 
For $i\in \{1,\ldots ,n-1\}$, let $c(i)$ be the number of possibilities for $
\alpha |_{\{1,\ldots ,i+1\}}$. Observe that $c(n-1)=|\wEnd P_n|$. 

First, we calculate $c(1)$. If $1\alpha \in \{2,\ldots ,n-1\}$ then we have
three possibilities for $2\alpha $. On the other hand, 
if $1\alpha \in \{1,n\}$ then we only have two possibilities for $2\alpha$. 
This shows that $c(1)=3(n-2)+2\cdot 2=3n-2$. 
Now, let $i\in \{1,\ldots ,n-2\}$ and suppose that $c(i)$ is known.
We will show that $c(i+1)=3c(i)-b(i)$, where $b(i)$ denotes the number of
possibilities for $(i+1)\alpha \in \{1,n\}$. In fact, 
if $(i+1)\alpha \in\{2,\ldots ,n-1\}$ then we have three possibilities for $(i+2)\alpha$ and, 
on the other hand, 
if $(i+1)\alpha \in \{1,n\}$ then we have only two possibilities for $(i+2)\alpha$. 
This shows that $c(i+1)=3(c(i)-b(i))+2b(i)=3c(i)-b(i)$. 
In this setting, we deduce that $c(n-1)=3^{n-2}(3n-2)-\sum_{r=1}^{n-2}3^{n-r-2}b(r)$, by 
performing successive replacements. 

It remains to calculate $b(r)$, for $r\in \{1,\ldots ,n-2\}$. 

For $k,p\in\{1,\ldots ,n-1\}$, let $a(k,p)$ denote the number of possibilities for 
$(k+1)\alpha =1$ and $1\alpha =p$. We will prove that $a(k,p)$ can be defined
as above. Clearly, $a(1,1)=a(1,2)=1$ and $a(1,p)=0$, for 
$p\in \{3,\ldots ,n-1\}$. 
Now, let $k\in \{2,\ldots ,n-2\}$ and suppose that 
$a(k-1,p)$ is known, for $p\in \{2,\ldots ,n-2\}$. 
If $1\alpha =1$ then $2\alpha \in \{1,2\}$ and $a(k,1)$ is the number of all possibilities for 
$k\alpha =1$, whenever $1\alpha =$ $1$ or $1\alpha =2$, i.e. 
$a(k,1)=a(k-1,1)+a(k-1,2)$. If $1\alpha \in \{2,\ldots ,n-2\}$ then 
$2\alpha\in \{1\alpha -1,1\alpha ,1\alpha +1\}$ and $a(k,p)$ is the number of
possibilities that $k\alpha =1$, whenever $1\alpha =p-1$ or $1\alpha =p$ or 
$1\alpha =p+1$, i.e. $a(k,p)=a(k-1,p-1)+a(k-1,p)+a(k-1,p+1)$. 
Clearly, $a(k,n-1)=0$, whenever $k<n-2$. 
Notice that $1\alpha =n-1$ and $(n-1)\alpha =1$
implies $r\alpha =n-1-r+1$, for $1\leq r\leq n-1$. Hence, there is only one
possibility for $1\alpha =n-1$ and $(n-1)\alpha =1$, i.e. $a(n-2,n-1)=1$.
Moreover, it is clear that  $k\alpha \neq 1$, whenever $1\alpha =n$ and $k<n$. 

Hence, for $a(r,i)$ as defined above, we have that 
$\sum_{i=1}^{n-1}a(r,i)$ is the number of possibilities
for $(r+1)\alpha =1$. 
Dually, $\sum_{i=1}^{n-1}a(r,i)$
is also the number of possibilities for $(r+1)\alpha =n$. 
Therefore, $b(r)=2\sum_{i=1}^{n-1}a(r,i)$, as required. 
\end{proof}

The table below gives us an idea of the size of $\wEnd P_n$. 

\begin{center}
$\begin{array}{|c|c|}\cline{1-2}
n & |\wEnd P_n|  \\ \cline{1-2}  
1 & 1\\ \cline{1-2}
2 & 4\\ \cline{1-2}
3 & 17\\ \cline{1-2}
4 &  68    \\ \cline{1-2}
5  &   259   \\ \cline{1-2}
6  &   950  \\ \cline{1-2}
7  &   3387  \\ \cline{1-2}
8  &   11814  \\ \cline{1-2}
\end{array}$ \quad
$\begin{array}{|c|c|c|}\cline{1-2}
n &  |\wEnd P_n| \\\cline{1-2}
9  &   40503   \\ \cline{1-2}
10  &  136946  \\ \cline{1-2}
11 &   457795\\ \cline{1-2}
12 &    1515926 \\ \cline{1-2}
13 &   4979777  \\ \cline{1-2}
14 &   16246924 \\ \cline{1-2}
15 &   52694573   \\ \cline{1-2}
16 &   170028792  \\ \cline{1-2}
\end{array}$
\end{center}

The formula given by Theorem \ref{card} allows us to calculate the cardinal of $\wEnd P_n$, even for larger $n$. 
For instance, we have $|\wEnd P_{100}|=15116889835751504709361077940682197429012095346416$.

\section{The ranks of the monoids $\End P_n$ and $\wEnd P_n$} \label{mainaim} 

Let
$$
\tau=\left(
\begin{array}{ccccc}
1&2&\cdots&n-1&n\\
n&n-1&\cdots&2&1
\end{array}
\right),
$$
$$
\alpha_i=\left(
\begin{array}{ccccccccccc}
1&2&\cdots&i-1&i&i+1&i+2&\cdots&n-1&n\\
i+1&i&\cdots&3&2&1&2&\cdots&n-i-1&n-i
\end{array}
\right),
$$
for $i=1,\ldots,n-2$, and 
$$
\beta_{j,i}=\left(
\begin{array}{ccccccccccccccc}
1&2&\cdots&i&i+1&\cdots&i+j&i+j+1&i+j+2&\cdots&i+2j&i+2j+1&i+2j+2&\cdots&n\\
1&2&\cdots&i&i+1&\cdots&i+j&i+j+1&i+j&\cdots&i+2&i+1&i+2&\cdots&n-2j
\end{array}
\right),
$$
for $j=1,\ldots,\lfloor\frac{n-3}{3}\rfloor$ and
$i=1,\ldots,n-3j-2$.
Let
$$
A'=\{\tau\}\cup\left\{\alpha_i\mid i=1,\ldots,n-2\right\}\cup\left\{\beta_{j,i}\mid j=1,\ldots,\lfloor\frac{n-3}{3}\rfloor,\, i=1,\ldots,n-3j-2\right\}.
$$
Also, let 
$$
A''=\{\tau\}\cup\left\{\alpha_i\mid i=1,\ldots,n-2\right\}.
$$

\begin{lemma}\label{le2}
Let $\alpha \in \End P_n$. Then $\{\alpha' \in \End P_n \mid \ker(\alpha') = \ker(\alpha)\} \subseteq \langle A'', \alpha \rangle$.
\end{lemma}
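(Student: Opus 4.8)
The plan is to first describe explicitly the set $K_\alpha=\{\alpha'\in\End P_n\mid \ker(\alpha')=\ker(\alpha)\}$ and then to realise each of its members by post-composing $\alpha$ with a suitable word in $A''$. By Proposition \ref{interval}, $\im(\alpha)=[a,b]$ is an interval; since $\alpha\in\End P_n$ is surjective onto $[a,b]$ and, by Proposition \ref{form}, its consecutive values differ by exactly $1$, the walk $1\alpha,\ldots,n\alpha$ traverses every edge of the subpath on $[a,b]$. Now if $\alpha'\in K_\alpha$, the common kernel yields a well-defined bijection $\sigma\colon\im(\alpha)\to\im(\alpha')$ with $x\alpha\mapsto x\alpha'$ and $\alpha'=\alpha\sigma$; since $\alpha'$ is an endomorphism and every edge of $[a,b]$ occurs as some $\{i\alpha,(i+1)\alpha\}$, the map $\sigma$ sends adjacent vertices to adjacent vertices, so it is an isomorphism of the path on $[a,b]$ onto the path on $\im(\alpha')$. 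By Theorem \ref{aut}, $\sigma$ is therefore either the order-preserving or the order-reversing isometry onto its image interval. Hence $K_\alpha$ is parameterised by the position and the orientation of an interval of length $|\im(\alpha)|=:m$ inside $\{1,\ldots,n\}$, and, as $m\ge2$ because $\alpha\in\End P_n$, these $2(n-m+1)$ maps are pairwise distinct.

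It then remains to produce, for each admissible position $a'$ and orientation, a word $w$ over $A''$ with $\alpha w$ equal to the corresponding member of $K_\alpha$; note that $\ker(\alpha w)=\ker(\alpha)$ as soon as $w$ restricts to an injection on $\im(\alpha)$, so only matching the image interval and its orientation has to be checked. The building blocks I would read off from the definitions of $\tau$ and $\alpha_i$ are: if $[a,b]\subseteq[t+1,n]$ then $\alpha_t$ acts on $[a,b]$ as the order-preserving translation $k\mapsto k-t$, so $\alpha\alpha_t$ is $\alpha$ with image shifted left to $[a-t,b-t]$; and $\alpha\tau$ is $\alpha$ with image reflected to $[n+1-b,n+1-a]$ and orientation reversed. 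Combining these, $\alpha\tau\alpha_t\tau$ shifts the image right to $[a+t,b+t]$ with orientation preserved whenever $t\le n-b$, while $\alpha\tau\alpha_t$ and $\alpha\alpha_t\tau$ give the order-reversing maps at, respectively, left- and right-shifted positions. A short case analysis then shows that the order-preserving members of $K_\alpha$ are reached by $\alpha\alpha_t$ (for $1\le t\le a-1$) together with $\alpha\tau\alpha_t\tau$ (for $1\le t\le n-b$), covering every start $a'\in\{1,\ldots,n-m+1\}$, and the order-reversing members are reached similarly by $\alpha\tau\alpha_t$ and $\alpha\alpha_t\tau$; all the indices $t$ needed lie in $\{1,\ldots,n-2\}$ precisely because $m\ge2$ forces $a\le n-1$ and $b\ge2$, so the corresponding $\alpha_t$ belong to $A''$.

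The routine but slightly tedious part — and the main obstacle — is the bookkeeping in this last step: one must verify that the range restrictions ($[a,b]\subseteq[t+1,n]$ for a left shift, $[n+1-b,n+1-a]\subseteq[t+1,n]$ for a right shift) hold exactly on the stated ranges of $t$, that the two families of shifts overlap so as to cover all of $\{1,\ldots,n-m+1\}$, and that the resulting orientation is the intended one in each of the four combinations of (left/right shift) with (preserve/reverse). Once this is carried out, every element of $K_\alpha$ has been written as $\alpha w$ with $w\in\langle A''\rangle$, and since $\alpha\in\langle A'',\alpha\rangle$ this yields $K_\alpha\subseteq\alpha\langle A''\rangle\subseteq\langle A'',\alpha\rangle$, as required.
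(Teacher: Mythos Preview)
Your proposal is correct and follows essentially the same route as the paper: both arguments identify the image of $\alpha$ as an interval, realise left and right translations of the image by post-composition with $\alpha_t$ and $\tau\alpha_t\tau$ respectively, and handle the orientation-reversing members of $K_\alpha$ via $\tau$. The only noteworthy differences are cosmetic: you characterise $K_\alpha$ by producing the bijection $\sigma$ and appealing to (the analogue of) Theorem~\ref{aut} for subpaths, whereas the paper argues directly that $|j_t-j_{t+1}|=1$ for all $t$; and for the reversed case the paper reduces via $\alpha'\tau$ to the already-settled order-preserving case instead of constructing $\alpha\tau\alpha_t$ and $\alpha\alpha_t\tau$ explicitly --- but the underlying computations are the same.
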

\begin{proof}
Let us suppose that 
$$
\alpha=\left(
\begin{array}{cccc}
X_1 &X_2&\cdots &X_k\\
i_1&i_2&\cdots&i_k
\end{array}
\right),
$$
with $\im(\alpha)=\{i_1 <i_2 <\cdots <i_k\}$ and $X_t=i_t\alpha^{-1}$, for $t=1,\ldots,k$, for some $1<k\le n$. 
Since $\im(\alpha)$ is an interval of $\{1,\ldots,n\}$, by Proposition \ref{interval}, then we have $i_t=i_1+t-1$, for $t=2,\ldots,k$.  

Since $x\alpha_i = x-i$, for all $x \in \{i+1, \ldots, n\}$, we obtain
$$
\alpha\alpha_i=\left(
\begin{array}{cccc}
X_1&X_2&\cdots&X_k\\
i_1-i &i_2-i&\cdots &i_k-i
\end{array}
\right) \in \langle A'', \alpha \rangle,
$$
with $\im(\alpha\alpha_i)=\{i_1-i <i_2-i <\cdots <i_k-i\}$, for $i = 1,2,\ldots,i_1-1$. 
Next, consider the transformation
$$
\tau\alpha_i\tau=\left(
\begin{array}{ccccccccc}
1&2&\cdots&n-i-1&n-i&n-i+1&\cdots&n\\
1+i&2+i&\cdots&n-1&n&n-1&\cdots&n-i
\end{array}
\right) \in \langle A'' \rangle, 
$$
for $i = 1,2,\ldots,n-2$. As $x(\tau\alpha_i\tau) = x+i$, for all $x \in \{1, \ldots, n-i\}$, we also get 
$$
\alpha\tau\alpha_i\tau=\left(
\begin{array}{cccc}
X_1&X_2&\cdots&X_k\\
i_1+i &i_2+i&\cdots &i_k+i
\end{array}
\right) \in \langle A'', \alpha \rangle,
$$
with $\im(\alpha\tau\alpha_i\tau)=\{i_1+i <i_2+i <\cdots <i_k+i\}$, for $ i = 1,2,\ldots,n-i_k$. 

\smallskip 

Thus, so far we proved that $\alpha'\in \langle A'', \alpha \rangle$, for all 
$$
\alpha'=\left(
\begin{array}{cccc}
X_1 &X_2&\cdots &X_k\\
j_1&j_2&\cdots&j_k
\end{array}
\right) \in\End P_n
$$
such that $\im(\alpha)=\{j_1 <j_2 <\cdots <j_k\}$ (and $X_t=j_t\alpha^{-1}$, for $t=1,\ldots,k$). 

\smallskip 

Now, take  
$$
\alpha'=\left(
\begin{array}{cccc}
X_1 &X_2&\cdots &X_k\\
j_1&j_2&\cdots&j_k
\end{array}
\right) \in\End P_n
$$
such that $\ker(\alpha')=\ker(\alpha)$ (with $X_t=j_t\alpha^{-1}$, for $t=1,\ldots,k$). 

Suppose there exists $p\in \{1,\ldots ,k-1\}$ such that $| j_{p}-j_{p+1} | >1$. 
Let $X^{-}=\bigcup \left\{ X_{j}\mid j\leq p\right\}$. 
Then, as $p<k$, we have $X^- \subsetneq\{1,\ldots ,n\}$ and so there exists 
$x\in X^{-}$ such that $x+1\in \{1,\ldots ,n\}\setminus X^{-}$ or $x-1\in\{1,\ldots ,n\}\setminus X^{-}$. 
Let us admit, without loss of generality, that $x+1\in \{1,\ldots ,n\}\setminus X^{-}$. 
Let $j\in \{1,\ldots ,p\}$ be such that $x\in X_{j}$.
Since $\alpha \in \End P_{n}$ is such that $X_t=i_t\alpha^{-1}$ 
and $i_t=i_1+t-1$, for $t=1,\ldots,k$, 
we can conclude that $x+1\in X_{j-1}\cup X_{j+1}$ (with $X_{j-1}=\emptyset$, if $j=1$).
As $x+1\not\in X^{-}$, it follows that $x+1\in X_{j+1}$ and $j+1>p$. 
Therefore $j=p$ and so, by Proposition \ref{form}, 
we have $1< | j_{p}-j_{p+1} | = | x\alpha'-(x+1)\alpha' | =1$, which is a contradiction. 

Thus, $| j_{t}-j_{t+1}| =1$, for all $t\in \{1,\ldots ,k-1\}$. This provides $j_{1}<j_{2}<\cdots <j_{k}$ or $j_{k}<j_{k-1}<\cdots <j_{1}$. 
If $j_{1}<j_{2}<\cdots <j_{k}$ then, as proved above, we have $\alpha'\in \left\langle A'',\alpha \right\rangle$. 
On the other hand, suppose that $j_{k}<j_{k-1}<\cdots <j_{1}$ and consider 
$$
\alpha'\tau=\left(
\begin{array}{ccccccc}
X_1& X_2 &\cdots &X_k\\
j_1\tau&j_2\tau&\cdots&j_k\tau
\end{array}
\right) \in \End P_n.   
$$
Then, as $\tau$ is a permutation of $\{1,\ldots,n\}$, we obtain $\ker(\alpha'\tau)=\ker(\alpha')=\ker(\alpha)$. 
Furthermore, we also have $\im(\alpha'\tau)=\{j_1\tau <j_2\tau <\cdots <j_k\tau\}$. Hence, 
again as proved above, we have $\alpha'\tau\in \langle A'', \alpha \rangle$. 
Since $\alpha'=(\alpha'\tau)\tau$, it follows that $\alpha'\in \langle A'', \alpha \rangle$. 

Thus, we proved that 
$\{\alpha' \in \End P_n \mid \ker(\alpha') = \ker(\alpha)\} \subseteq \langle A'', \alpha \rangle$, as required. 
\end{proof}

\smallskip 

Let $\alpha\in\wEnd P_n$. We say that $i\in\{2,\ldots,n-1\}$ is an \textit{inversion} of $\alpha$ if $(i-1)\alpha=(i+1)\alpha\ne i\alpha$. 
Denote by 
$
\Inv(\alpha)
$
the set of all inversions of $\alpha$ and by $\inv(\alpha)$ the number of elements of $\Inv(\alpha)$. 
Notice that, if $\alpha\in\End P_n$ then $i\in\{2,\ldots,n-1\}$ is an \textit{inversion} of $\alpha$ if and only if $(i-1)\alpha=(i+1)\alpha$. 

\smallskip 

For elements of $\End P_n$, we have: 

\begin{lemma}\label{invker}
Let $\alpha,\beta\in \End P_n$. Then $\ker(\alpha)=\ker(\beta)$ if and only if $\Inv(\alpha)=\Inv(\beta)$.
\end{lemma}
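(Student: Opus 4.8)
The plan is to encode each $\gamma\in\End P_n$ by the sequence of its consecutive differences and to read off both $\ker(\gamma)$ and $\Inv(\gamma)$ from that sequence. By Proposition \ref{form}, for $\gamma\in\End P_n$ we have $d_i^\gamma:=(i+1)\gamma-i\gamma\in\{-1,+1\}$ for every $i\in\{1,\ldots,n-1\}$, so $\gamma$ behaves as a walk of unit steps on $\{1,\ldots,n\}$.

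The forward implication is immediate. As already observed, for $\gamma\in\End P_n$ the condition $(i-1)\gamma=(i+1)\gamma$ forces $(i-1)\gamma=(i+1)\gamma\ne i\gamma$ (because $|i\gamma-(i+1)\gamma|=1$), so $i\in\Inv(\gamma)$ if and only if $(i-1)\gamma=(i+1)\gamma$, i.e. if and only if the pair $(i-1,i+1)$ belongs to $\ker(\gamma)$. Hence
$$
\Inv(\gamma)=\{i\in\{2,\ldots,n-1\}\mid (i-1,i+1)\in\ker(\gamma)\}
$$
is completely determined by $\ker(\gamma)$, and $\ker(\alpha)=\ker(\beta)$ at once gives $\Inv(\alpha)=\Inv(\beta)$.

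For the converse I would record two elementary facts about the step sequence. First, the inversion set determines $(d_i^\gamma)_i$ up to a global sign: for $i\in\{2,\ldots,n-1\}$ one has $(i+1)\gamma-(i-1)\gamma=d_i^\gamma+d_{i-1}^\gamma$, so $i\in\Inv(\gamma)$ exactly when $d_i^\gamma=-d_{i-1}^\gamma$; thus, once $d_1^\gamma$ is fixed, the recursion $d_i^\gamma=\pm d_{i-1}^\gamma$ (with the sign according to whether $i\in\Inv(\gamma)$) reconstructs the whole sequence, and flipping $d_1^\gamma$ flips every $d_i^\gamma$. Second, the kernel is visible in the partial sums: since $y\gamma-x\gamma=\sum_{j=x}^{y-1}d_j^\gamma$ for $x<y$, we have $x\gamma=y\gamma$ if and only if $\sum_{j=x}^{y-1}d_j^\gamma=0$; this is independent of the starting value $1\gamma$ and is unaffected by replacing $(d_j^\gamma)_j$ by $(-d_j^\gamma)_j$.

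Combining the two facts finishes the argument. If $\Inv(\alpha)=\Inv(\beta)$, then either $d_j^\alpha=d_j^\beta$ for all $j$ or $d_j^\alpha=-d_j^\beta$ for all $j$; in both cases $\sum_{j=x}^{y-1}d_j^\alpha=0$ if and only if $\sum_{j=x}^{y-1}d_j^\beta=0$ for all $x<y$, so $\alpha$ and $\beta$ have the same fibres and $\ker(\alpha)=\ker(\beta)$. The only delicate point—and the step I would be most careful to state precisely—is the two-fold sign ambiguity in the reconstruction of the step sequence: one must check that reversing the global sign preserves the vanishing of every partial sum, so that a single inversion set really pins down a single kernel. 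Everything else is a routine unwinding of the definitions and of Proposition \ref{form}.
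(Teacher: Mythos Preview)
Your proof is correct and follows essentially the same approach as the paper's: both recognize that an element of $\End P_n$ is a unit-step walk whose step sequence is determined by $\Inv(\gamma)$ up to a global sign, and that the kernel depends only on the vanishing of partial sums of the steps, which is insensitive to that sign and to the starting value. Your encoding via $d_i^\gamma=(i+1)\gamma-i\gamma$ is a cleaner packaging of exactly what the paper writes as the explicit formula $x\gamma = 1\gamma + \sum_{j=0}^{p(x)-1}(-1)^{j+a_\gamma}(i_{j+1}-i_j) + (-1)^{p(x)+a_\gamma}r(x)$, with your global sign ambiguity corresponding to the paper's parameter $a_\gamma\in\{1,2\}$.
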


\begin{proof}
Clearly, $\ker(\alpha) = \ker(\beta) $ implies $\Inv(\alpha)=\Inv(\beta)$. 
Conversely, admit that $\Inv(\alpha)=\Inv(\beta)$. Let $\Inv(\alpha)=\{i_{1}<\cdots <i_{k}\}$, for some $k\in \{1,\ldots ,n-2\}$, and 
define $i_{0}=1$ and $i_{k+1}=n$. 
For any $x\in \{1,\ldots,n\}$, let  $p(x)\in \{0,\ldots,k\}$ and $r(x)\in \{0,\ldots,i_{p(x)+1}-i_{p(x)}-1\}$ 
be such that $x=i_{p(x)}+r(x)$. 
Since $\left\vert x\alpha -(x+1)\alpha \right\vert =\left\vert x\beta -(x+1)\beta\right\vert =1$,  
for all $x\in \{1,\ldots,n-1\}$, and $\Inv(\alpha)=\Inv(\beta)$, 
there exist $a_{\alpha},a_{\beta}\in \{1,2\}$ such that
$$
x\alpha = 1\alpha +\underset{j=0}{\overset{p(x)-1}{\sum}}(-1)^{j+a_{\alpha}}(i_{j+1}-i_{j})+(-1)^{p(x)+a_{\alpha}}r(x)
$$
and
$$
x\beta = 1\beta +\underset{j=0}{\overset{p(x)-1}{\sum}}(-1)^{j+a_{\beta}}(i_{j+1}-i_{j})+(-1)^{p(x)+a_{\beta }}r(x).
$$
Therefore, $(x,y)\in \ker(\alpha)$ if and only if $x\alpha =y\alpha$
if and only if 
$$
1\alpha +\underset{j=0}{\overset{p(x)-1}{\sum}}(-1)^{j+a_{\alpha }}(i_{j+1}-i_{j})+(-1)^{p(x)+a_{\alpha}}r(x) = 
1\alpha + \underset{j=0}{\overset{p(y)-1}{\sum}}(-1)^{j+a_{\alpha}}(i_{j+1}-i_{j})+(-1)^{p(y)+a_{\alpha}}r(y)
$$ 
if and only if (by multiplication with $(-1)^{\left\vert a_{\alpha}-a_{\beta}\right\vert}$
and addition of $1\beta - (1\alpha)(-1)^{\left\vert a_{\alpha}-a_{\beta}\right\vert}$) 
$$
1\beta + \underset{j=0}{\overset{p(x)-1}{\sum}}(-1)^{j+a_{\beta}}(i_{j+1}-i_{j})+(-1)^{p(x)+a_{\beta}}r(x) =
1\beta + \underset{j=0}{\overset{p(y)-1}{\sum}}(-1)^{j+a_{\beta}}(i_{j+1}-i_{j})+(-1)^{p(y)+a_{\beta}}r(y)
$$
if and only if $x\beta =y\beta$ if and only if $(x,y)\in \ker(\beta)$. 
Thus, $\ker(\alpha) = \ker(\beta)$, as required. 
\end{proof}

The following lemma is an immediate consequence of Lemmas \ref{le2} and \ref{invker}. 

\begin{lemma}\label{rem}
Let $\alpha, \beta \in \End P_{n}$ be such that $\Inv(\alpha)=\Inv(\beta)$. Then $\alpha\in \langle A''\rangle$ if and only if $\beta\in \langle A''\rangle$. 
Moreover, $\alpha\in \langle A'\rangle$ if and only if $\beta\in \langle A'\rangle$. 
\end{lemma}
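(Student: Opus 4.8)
The plan is to derive Lemma \ref{rem} as a direct corollary of Lemmas \ref{le2} and \ref{invker}, exploiting the symmetry of the statement. First I would observe that by Lemma \ref{invker}, the hypothesis $\Inv(\alpha)=\Inv(\beta)$ is equivalent to $\ker(\alpha)=\ker(\beta)$; this translation lets me apply Lemma \ref{le2}, which is phrased in terms of kernels. The whole argument then reduces to a symmetric containment argument, so it suffices to prove only one implication in each of the two assertions and invoke the symmetry $\alpha\leftrightarrow\beta$ to obtain the converse.

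For the first assertion, suppose $\alpha\in\langle A''\rangle$. Since $\ker(\alpha)=\ker(\beta)$, Lemma \ref{le2} applied to $\alpha$ gives
$$
\beta\in\{\alpha'\in\End P_n\mid\ker(\alpha')=\ker(\alpha)\}\subseteq\langle A'',\alpha\rangle.
$$
But $\alpha\in\langle A''\rangle$ forces $\langle A'',\alpha\rangle=\langle A''\rangle$, whence $\beta\in\langle A''\rangle$. Interchanging the roles of $\alpha$ and $\beta$ (which is legitimate because the hypothesis $\Inv(\alpha)=\Inv(\beta)$ is symmetric) yields the reverse implication, establishing the equivalence $\alpha\in\langle A''\rangle\Leftrightarrow\beta\in\langle A''\rangle$.

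For the second assertion, the same reasoning goes through verbatim with $A''$ replaced by $A'$: since $A''\subseteq A'$, the containment $\langle A'',\alpha\rangle\subseteq\langle A',\alpha\rangle$ holds, so from $\ker(\alpha)=\ker(\beta)$ and Lemma \ref{le2} we get $\beta\in\langle A'',\alpha\rangle\subseteq\langle A',\alpha\rangle$, and assuming $\alpha\in\langle A'\rangle$ collapses this to $\langle A'\rangle$. The symmetric argument closes the equivalence.

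I do not expect a genuine obstacle here, since the lemma is advertised as an immediate consequence. The only point requiring minor care is the bookkeeping step $\langle A'',\alpha\rangle=\langle A''\rangle$ (respectively $\langle A',\alpha\rangle=\langle A'\rangle$) under the assumption that $\alpha$ already lies in the subsemigroup generated by $A''$ (respectively $A'$); this is just the general fact that adjoining an element already contained in a subsemigroup does not enlarge it. The conceptual content is entirely carried by Lemma \ref{le2}, and the role of Lemma \ref{invker} is solely to recast the inversion hypothesis as a kernel hypothesis.
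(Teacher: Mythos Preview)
Your proof is correct and follows exactly the approach the paper intends: the paper states that Lemma~\ref{rem} is an immediate consequence of Lemmas~\ref{le2} and~\ref{invker}, and your argument spells out precisely this derivation. The only thing you have added is the explicit bookkeeping, which is entirely routine.
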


Now, we can prove: 

\begin{lemma}\label{ger}
$\End P_n = \langle A' \rangle$.
\end{lemma}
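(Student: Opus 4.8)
The plan is to show that every $\alpha \in \End P_n$ lies in $\langle A' \rangle$ by inducting on the number of inversions $\inv(\alpha)$. By Lemma~\ref{rem}, membership in $\langle A' \rangle$ depends only on $\Inv(\alpha)$, so it suffices to exhibit, for each admissible inversion set, one endomorphism with that inversion set lying in $\langle A' \rangle$. The base case is $\inv(\alpha)=0$: here $\alpha$ has no inversions, so $|i\alpha-(i+1)\alpha|=1$ with constant sign throughout, meaning $\alpha$ is either strictly increasing or strictly decreasing on $\{1,\ldots,n\}$. These are the identity and $\tau$ (and their translates), which lie in $\langle A'' \rangle \subseteq \langle A' \rangle$ by Lemma~\ref{le2} applied with $\alpha$ the identity.

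For the inductive step, suppose the result holds for all endomorphisms with fewer than $m$ inversions, and let $\alpha$ have $\inv(\alpha)=m\ge1$. The idea is to find a generator $\beta_{j,i} \in A'$ (or a related product from $A''$) whose composition with $\alpha$ removes exactly one inversion, i.e. produces an endomorphism $\alpha'$ with $\Inv(\alpha') \subsetneq \Inv(\alpha)$ and $\inv(\alpha')=m-1$, while being recoverable: $\alpha$ should be expressible as a product of $\alpha'$ with generators from $A'$. The maps $\beta_{j,i}$ are precisely designed for this — each $\beta_{j,i}$ folds a segment so as to introduce or cancel an inversion at a controlled position, with the parameter $j$ governing the width of the fold and $i$ its location. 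I would first understand the action of $\beta_{j,i}$ on intervals via Proposition~\ref{interval} and Proposition~\ref{form}, identifying which single inversion of $\alpha$ it targets. Writing $\Inv(\alpha)=\{i_1<\cdots<i_m\}$, I would choose a generator that eliminates (say) the largest inversion $i_m$, reducing to $\alpha'$ with $m-1$ inversions, apply the induction hypothesis to get $\alpha' \in \langle A' \rangle$, and then reconstruct $\alpha$.

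The main obstacle, and the place requiring genuine care, is the reconstruction: composing with a fold $\beta_{j,i}$ is not invertible, so I cannot simply multiply back. Instead the argument must run the other way — I would express $\alpha$ itself as $\alpha' \beta_{j,i}$ or as a product in which $\beta_{j,i}$ acts on an already-reduced map to \emph{reinstate} the missing inversion. This forces a careful matching of the fold's geometry (the values $j$ and $i$, and the constraints $j \le \lfloor\frac{n-3}{3}\rfloor$, $i \le n-3j-2$) against the spacing of the target inversion within $\im(\alpha)$, and one must verify the resulting composite indeed lies in $\End P_n$ and has the correct inversion set. Lemma~\ref{rem} is the key simplification here: because only the inversion set matters, I have freedom to normalize $\alpha$ (e.g. assume a convenient image interval and orientation, absorbing translations and the flip $\tau$ into $\langle A'' \rangle$) before performing the fold, which should make the index bookkeeping tractable. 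I expect the bulk of the proof to be verifying that the $\beta_{j,i}$ collectively realize every single-inversion modification, so that any inversion set can be built up one element at a time from the orientation-only maps in the base case.
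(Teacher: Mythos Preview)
Your inductive framework is right, and using Lemma~\ref{rem} to reduce to inversion sets is exactly how the paper proceeds. But there is a genuine gap in the mechanism you propose for the inductive step.

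You write that the $\beta_{j,i}$ ``introduce or cancel an inversion'' and expect them to ``realize every single-inversion modification, so that any inversion set can be built up one element at a time.'' This is not what the $\beta_{j,i}$ do: each $\beta_{j,i}$ has $\Inv(\beta_{j,i})=\{i+j+1,\,i+2j+1\}$, i.e.\ \emph{two} inversions, not one. The one-inversion generators are the $\alpha_i$. More importantly, one-at-a-time buildup is impossible in general: Lemma~\ref{4w} of the paper proves that any generating set must contain, for each admissible pair $(j,i)$, an element whose inversion set is exactly $\{i+j+1,i+2j+1\}$ or its reflection. Hence the two-inversion endomorphisms $\beta_{j,i}$ cannot be obtained by composing elements with fewer inversions, and your reduction ``remove the largest inversion $i_m$'' cannot always be carried out.

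What the paper actually does is split the inductive step into cases according to where the extreme image values are attained. If some inversion $i_k$ satisfies $i_k\alpha\in\{1,\max\im(\alpha)\}$, one can unfold at that point and reduce $\inv$ by one, recovering $\alpha$ via composition with an $\alpha_{a-1}$. The delicate case is when $\{1,\max\im(\alpha)\}\alpha^{-1}=\{1,n\}$, so no inversion hits the image boundary; here a single-inversion reduction is unavailable, and the paper instead removes a carefully chosen \emph{pair} of inversions (a local max and the next local min) simultaneously, recovering $\alpha$ by composing the reduced map with a $\beta_{c-d,\,d-1}$. The base of this branch, $\inv(\alpha)=2$ in this configuration, is handled by showing $\alpha$ literally equals some $\beta_{j,i}$. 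Your proposal does not anticipate this dichotomy, and without it the induction stalls precisely at the $\beta_{j,i}$ themselves.
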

\begin{proof}
Let $\alpha \in \End P_n$. We will proceed by induction on $\inv(\alpha)$.

If $\inv(\alpha) = 0$ then $\alpha = \tau \in A'$ or $\alpha = 1 = \tau^2 \in \langle A' \rangle$.

If $\inv(\alpha) = 1$ then $\Inv(\alpha)=\Inv(\alpha_{i})$, for some $i\in \{1,2,\ldots,n-2\}$, and so
$\alpha \in \left\langle A' \right\rangle$, by Lemma \ref{rem}.

Now, let $r \geq 1$ and suppose, by induction hypothesis, that $\alpha \in \langle A' \rangle$, 
for all $\alpha \in \End P_n$ with $\inv(\alpha) \leq r$.

Let $\alpha \in \End P_n$ be such that $\inv(\alpha) = r+1$. By (the proof of) Lemma \ref{le2}, we can assume, without loss of generality, that $1 \in \im (\alpha)$.
Let $\Inv(\alpha) = \{i_1 < i_2 < \cdots < i_{r+1}\}$ and define $i_0 = 1$ and $i_{r+2} = n$. Let $b = \max \im (\alpha)$. 
Notice that $b \geq 2$ and, as $\im(\alpha) = \{1,\ldots, b\}$ (by Proposition \ref{interval}) and $\inv(\alpha) = r + 1 \geq 2$, 
we get $b \leq n-2$. Clearly, we have $1\alpha^{-1}, b\alpha^{-1} \subseteq \{i_\ell \mid \ell \in \{0,1,\ldots,r+2\}\}$.

We will consider three cases: (1) there exists $k \in \{1,\ldots,r+1\}$ such that $i_k\alpha = 1$; (2) there exists $k \in \{1,\ldots,r+1\}$ such that  $i_k\alpha = b$; (3) $1\alpha = 1$ and $n\alpha = b$ (or $1\alpha = b$ and $n\alpha = 1$) and $x\alpha \not\in\{1,b\}$, 
for all $x \in \{2,\ldots,n-1\}$.

\smallskip 

{\sc case} 1. Suppose that there exists $k \in \{1,\ldots,r+1\}$ such that $i_k\alpha = 1$.
Let $a = \max\{x\alpha \mid i_k \leq x \leq n\}$ and
define a transformation $\beta$ of $\{1,\ldots,n\}$ by
$$
x\beta = \left\{
\begin{array}{ll}
x\alpha +a-1,& x < i_k\\
a+1-x\alpha, & x \geq i_k.
\end{array}
\right.
$$

Then $\beta \in \End P_n$. In fact, since $\alpha \in \End P_n$, we have $|x\beta - (x+1)\beta| = 1$, 
for all $x \in \{1,\ldots,n-1\}\setminus \{i_k-1\}$.
Moreover, from $(i_k-1)\beta = (i_k-1)\alpha+a-1 = i_k\alpha \pm 1+a-1 = 1 \pm 1+a-1 = a\pm 1$ and $i_k\beta = a+1-i_k\alpha = a+1-1 = a$
(since $i_k\alpha = 1$), it follows that $|(i_k-1)\beta - i_k\beta| = 1$.

Next, we show that $\Inv(\beta) = \Inv(\alpha)\setminus \{i_{k}\}$. Clearly,
if $x\in\{1,\ldots, i_{k}-2\} \cup \{i_{k}+1,\ldots,n\}$ then $x\in \Inv(\beta)$ if and only if $x\in \Inv(\alpha)$. Also
$$
\begin{array}{rcl}
    i_{k}-1\in \Inv(\beta) & \Leftrightarrow& (i_{k}-2)\beta=(i_{k})\beta \\
     & \Leftrightarrow & (i_{k}-2)\alpha+a-1=a+1-i_{k}\alpha \\
     & \Leftrightarrow & (i_{k}-2)\alpha=1=i_{k}\alpha \\
     & \Leftrightarrow & i_{k}-1\in \Inv(\alpha).
  \end{array}
  $$
On the other hand, since $i_k\alpha = 1$, we have $(i_k-1)\alpha = (i_k+1)\alpha = 2$. Then
$$
(i_k-1)\beta = (i_k-1)\alpha +a-1 = 2+a-1 = a+1,
$$
$$
i_k\beta = a+1-i_k\alpha = a+1-1 = a
$$
and
$$
(i_k+1)\beta = a+1-(i_k+1)\alpha = a+1-2 = a-1,
$$
whence $i_k \notin \Inv(\beta)$.
Therefore, $\inv(\beta) = r$ and so, by induction, we have $\beta \in \langle A' \rangle$.

Finally, within this case, we prove that $\Inv(\alpha) = \Inv(\beta\alpha_{a-1})$.
Let $i \in \Inv(\alpha)\cap \{1,\ldots,i_k-2\}$. Then
$$
\begin{array}{rcl}
    (i-1)\beta\alpha_{a-1}=(i+1)\beta\alpha_{a-1} & \Leftrightarrow& ((i-1)\alpha+a-1)\alpha_{a-1}=((i+1)\alpha+a-1)\alpha_{a-1} \\
    & \Leftrightarrow & (i-1)\alpha=(i+1)\alpha
  \end{array}$$
(by the definition of $\alpha_{a-1}$ restricted to $\{a,\ldots,n\}$).
Let $i \in \Inv(\alpha)\cap \{i_k+1,\ldots,n\}$. Then
$$
\begin{array}{rcl}
    (i-1)\beta\alpha_{a-1}=(i+1)\beta\alpha_{a-1} & \Leftrightarrow & (a+1-(i-1)\alpha)\alpha_{a-1}=(a+1-(i+1)\alpha)\alpha_{a-1} \\
     & \Leftrightarrow & (i-1)\alpha=(i+1)\alpha
  \end{array}
$$
(by the definition of $\alpha_{a-1}$ restricted to $\{1,\ldots,a\}$).
Moreover, we have
$$
(i_{k}-2)\beta\alpha_{a-1}=(i_{k})\beta\alpha_{a-1}\Leftrightarrow ((i_{k}-2)\alpha+a-1)\alpha_{a-1}=(a+1-(i_{k})\alpha)\alpha_{a-1}.
$$
Thereby, if $i_{k}-1 \in \Inv(\alpha)$ then $(i_{k}-2)\alpha=i_{k}\alpha=1$, 
whence $(i_{k}-2)\alpha+a-1= a = a+1-i_{k}\alpha$ and so 
$(i_{k}-2)\beta\alpha_{a-1}=i_{k}\beta\alpha_{a-1}$, i.e. 
$i_{k}-1 \in \Inv(\beta\alpha_{a-1})$. 
Conversely, if $i_{k}-1 \notin \Inv(\alpha)$ then $(i_{k}-2)\alpha \ne i_{k}\alpha$ and, as $(i_{k}-1)\alpha=2$, we have $(i_{k}-2)\alpha=3$, from which follows $(i_{k}-2)\beta\alpha_{a-1}=(a+2)\alpha_{a-1}=3\ne 1=i_{k}\beta\alpha_{a-1}$ and so $i_{k}-1\notin \Inv(\beta\alpha_{a-1})$.
It remains to show that $i_{k}\in \Inv(\beta\alpha_{a-1})$. 
In fact, since $a \in \Inv(\alpha_{a-1})$ and $(i_{k}-1)\alpha=2=(i_{k}+1)\alpha$, 
we obtain 
$$
(i_{k}-1)\beta\alpha_{a-1}=((i_{k}-1)\alpha+a-1)\alpha_{a-1}=(a+1)\alpha_{a-1}=
(a-1)\alpha_{a-1}=(a+1-(i_{k}+1)\alpha)\alpha_{a-1}=(i_{k}+1)\beta\alpha_{a-1}.
$$
Thus, we have $\Inv(\alpha) = \Inv(\beta\alpha_{a-1})$. 
Since $\beta \in \langle A' \rangle$, then $\beta\alpha_{a-1} \in \langle A' \rangle$ and so, 
by Lemma \ref{rem}, we have $\alpha \in \langle A' \rangle$.

\smallskip 

{\sc case} 2. Suppose now that there exists $k \in \{1,\ldots,r+1\}$ such that $i_k\alpha = b$. 
Recall that $\im(\alpha) = \{1,\ldots, b\}$.
Consider the transformation $\alpha\tau\alpha_{n-b} \in \End P_n$. 
Since $\tau$ is a permutation of $\{1,\ldots,n\}$ and $\alpha_{n-b}$ is injective
in $\{n-b+1,\ldots,n\} = \im(\alpha)\tau = \im(\alpha\tau)$, then $\ker(\alpha\tau\alpha_{n-b}) = \ker(\alpha)$, 
i.e. $\Inv(\alpha\tau\alpha_{n-b}) = \Inv(\alpha)$, by Lemma \ref{invker}. 
Hence, $\alpha \in \langle A' \rangle$ if and only if $\alpha\tau\alpha_{n-b} \in \langle A' \rangle$, by Lemma \ref{rem}. 
Observe also that $i_k \in b\alpha^{-1} = 1(\alpha\tau\alpha_{n-b})^{-1}$ and so, in particular, 
we also have $i_k(\alpha\tau\alpha_{n-b}) = 1$, i.e. $\alpha\tau\alpha_{n-b}$ satisfies the condition of {\sc case}  1. 
Therefore, $\alpha\tau\alpha_{n-b} \in \langle A' \rangle$ and so we have $\alpha \in \langle A' \rangle$.

\smallskip 

{\sc case}  3. Next, we suppose that $\{1,b\}\alpha^{-1} = \{1,n\}$. 
Without loss of generality, let $1\alpha=1$ and $n\alpha=b$ (if $1\alpha=b$ and $n\alpha=1$ then $1\tau\alpha=1$, $n\tau\alpha=b$ and, by Lemma \ref{rem}, $\alpha \in \langle A' \rangle$ if and only if $\tau\alpha \in \langle A' \rangle$).

First, let us admit that $r=1$, i.e. $\Inv(\alpha)=\{i_{1},i_{2}\}$. Let $j=i_{2}-i_{1}$ and $i=2i_{1}-i_{2}-1$. 
As $1\alpha=1$ and $n\alpha=b$, then $i_{1}\alpha=i_{1}$ and $i_{2}\alpha=i_{1}-(i_{2}-i_{1})=2i_{1}-i_{2}=i+1$.  
In addition, from $b-i_{2}\alpha=n-i_{2}$, we obtain 
$$
b=n+i_{2}\alpha-i_{2}=n+(2i_{1}-i_{2})-i_{2}=n+2i_{1}-2i_{2}=n-2j.
$$
As $2 \leq i_2\alpha < i_1\alpha \leq b-1$, we have $b=n-2j \geq i_{1}\alpha+1 = i_{1}+1$, whence $i_{1}\le n-2j-1$, 
and $i_{1}\alpha-i_{2}\alpha\leq b-3$.
Thus,  
$$
i=2i_{1}-i_{2}-1\leq n-2j-1 + (i_{1}-i_{2})-1=n-3j-2
$$
and
$$
j=i_{2}-i_{1}=i_{1}\alpha-i_{2}\alpha\leq b-3=n-2j-3 \quad\Rightarrow\quad 3j\leq n-3 \quad\Rightarrow\quad j\leq \lfloor\frac{n-3}{3}\rfloor.$$
Therefore, we may consider $\beta_{j,i}\in A'$ and, clearly, we have $\alpha=\beta_{j,i}$. 
Hence $\alpha\in\langle A' \rangle$.

Now, suppose that $r > 1$. 
Define $c=\max\{i_1\alpha, \ldots, i_r\alpha\}$ and let $k \in \{1,2,\ldots,r\}$ be such that $i_k\alpha = c$. 
Since $i_{r+1}\alpha<b=n\alpha$, we have $i_{r+1}\alpha<i_r\alpha\le c$. 
Also, define $d=\min\{i_{k+1}\alpha, \ldots, i_{r+1}\alpha\}$ and let $\ell \in \{k+1,\ldots,r+1\}$ be such that $i_\ell\alpha = d$. 
Furthermore, we define a transformation $\gamma$ of $\{1,\ldots,n\}$ by
$$
x\gamma = \left\{
\begin{array}{ll}
x\alpha, & x < i_k\\
2c-x\alpha, & i_k \leq x \leq i_\ell\\
x\alpha + 2c-2d, & x > i_\ell.
\end{array}
\right.
$$
Then $\gamma \in \End P_n$. In fact, since $\alpha \in \End P_n$, we have $|x\gamma - (x+1)\gamma| = 1$, 
for all $x \in \{1,\ldots,n-1\}\setminus \{i_k-1, i_\ell\}$.
Moreover, from $(i_k-1)\gamma = (i_k-1)\alpha = i_k\alpha - 1=c-1$ 
(notice that, if $(i_{k}-1)\alpha=i_k\alpha+1=(i_{k}+1)\alpha$ then, as $1\alpha=1$, it would exist 
$t\in\{1,\ldots,k-1\}$ such that $i_{t}\alpha>c$, which contradicts the definition of $i_{k}$) 
and $i_k\gamma = 2c - i_k\alpha =c$,
it follows that $|(i_k-1)\gamma - i_k\gamma| = 1$.
On the other hand, since we must have $(i_\ell+1)\alpha = i_\ell\alpha + 1$, 
from $i_\ell\gamma = 2c - i_\ell\alpha =2c-d$ and $(i_\ell+1)\gamma =(i_\ell+1)\alpha +2c-2d = i_\ell\alpha+1 + 2c-2d =2c-d+1$,  
it follows that $|i_\ell\gamma - (i_\ell+1)\gamma| = 1$.

Next, we show that $\Inv(\gamma) = \Inv(\alpha)\setminus \{i_{k},i_{\ell}\}$. 

Clearly,
if $x\in\{1,\ldots, i_{k}-2\} \cup \{i_{k}+1,\ldots,i_{\ell}-1\}\cup \{i_{\ell}+2,\ldots,n\}$ then $x\in \Inv(\gamma)$ if and only if $x\in \Inv(\alpha)$. 
Also
$$
\begin{array}{rcl}
    i_{k}-1\in \Inv(\gamma) & \Leftrightarrow& (i_{k}-2)\gamma=i_{k}\gamma \\
     & \Leftrightarrow& (i_{k}-2)\alpha = 2c - i_{k}\alpha \\
     & \Leftrightarrow& (i_{k}-2)\alpha = c = i_{k}\alpha \\
     & \Leftrightarrow& i_{k}-1\in \Inv(\alpha).
  \end{array}
$$
On the other hand, since $i_k\alpha = c$ and $(i_k-1)\alpha = (i_k+1)\alpha = c-1$, we have
$$
(i_k-1)\gamma = (i_k-1)\alpha = c-1,
$$
$$
i_k\gamma = 2c - i_k\alpha = c
$$
and
$$
(i_k+1)\gamma = 2c -(i_k+1)\alpha = 2c - (c-1) = c+1,
$$
whence $i_k \notin \Inv(\gamma)$.
Moreover,
$$
\begin{array}{rcl}
    i_{\ell}+1\in \Inv(\gamma) & \Leftrightarrow& i_{\ell}\gamma=(i_{\ell}+2)\gamma \\
    & \Leftrightarrow& 2c - i_{\ell}\alpha = (i_{\ell}+2)\alpha +2c-2d \\
    & \Leftrightarrow& 2c-d = (i_{\ell}+2)\alpha+2c-2d \\
    & \Leftrightarrow& (i_{\ell}+2)\alpha=d=i_{\ell}\alpha \\
    & \Leftrightarrow& i_{\ell}+1\in \Inv(\alpha).
  \end{array}
 $$
On the other hand, since $i_\ell\alpha = d$ and $(i_\ell-1)\alpha = (i_\ell+1)\alpha = d+1$, we have
$$
(i_\ell-1)\gamma = 2c-(i_\ell-1)\alpha = 2c -(d+1) = 2c-d-1,
$$
$$
i_\ell\gamma = 2c - i_\ell\alpha = 2c-d
$$
and
$$
(i_\ell+1)\gamma = (i_\ell+1)\alpha +2c-2d = d+1 + 2c-2d =2c-d+1,
$$
whence $i_\ell \notin \Inv(\gamma)$.

Therefore, $\inv(\gamma) = r-1$ and so, by induction, we have $\gamma \in \langle A' \rangle$.

Finally, we prove that $\Inv(\alpha) = \Inv(\gamma\beta_{c-d,d-1})$.

Let $i\in \Inv(\alpha)\cap\{1,\dots,i_{k}-2\}$. Then
$$
\begin{array}{rcl}
    (i-1)\gamma\beta_{c-d,d-1}=(i+1)\gamma\beta_{c-d,d-1} & \Leftrightarrow& (i-1)\alpha\beta_{c-d,d-1}=(i+1)\alpha\beta_{c-d,d-1} \\
    & \Leftrightarrow& (i-1)\alpha=(i+1)\alpha
  \end{array}
$$
(by the definition of $\beta_{c-d,d-1}$ restricted to $\{1,\ldots,c\}$).

Let $i\in \Inv(\alpha)\cap\{i_{k}+1,\dots,i_{\ell}-1\}$. Then
$$
\begin{array}{rcl}
    (i-1)\gamma\beta_{c-d,d-1}=(i+1)\gamma\beta_{c-d,d-1} & \Leftrightarrow& (2c-(i-1)\alpha)\beta_{c-d,d-1}=(2c-(i+1)\alpha)\beta_{c-d,d-1} \\
    & \Leftrightarrow& (c+(c-(i-1)\alpha))\beta_{c-d,d-1}=(c+(c-(i+1)\alpha))\beta_{c-d,d-1} \\
    & \Leftrightarrow& (i-1)\alpha=(i+1)\alpha
  \end{array}
$$
(by the definition of $\beta_{c-d,d-1}$ restricted to $\{c,\ldots,2c-d\}$).

Let $i\in \Inv(\alpha)\cap\{i_{\ell}+2,\dots,n\}$. Then 
$$
\begin{array}{ll}
  &   (i-1)\gamma\beta_{c-d,d-1}=(i+1)\gamma\beta_{c-d,d-1}\\
  \Leftrightarrow& ((i-1)\alpha+2c-2d)\beta_{c-d,d-1}=((i+1)\alpha+2c-2d)\beta_{c-d,d-1} \\
  \Leftrightarrow& (2c-d+((i-1)\alpha-d))\beta_{c-d,d-1}=(2c-d+((i+1)\alpha-d))\beta_{c-d,d-1} \\
  \Leftrightarrow& (i-1)\alpha=(i+1)\alpha
  \end{array}
 $$
(by the definition of $\beta_{c-d,d-1}$ restricted to $\{2c-d,\ldots,n\}$).

Moreover, if $i_{k}-1\in \Inv(\alpha)$ then $(i_{k}-2)\alpha=i_{k}\alpha=c$. Thus,  
$
(i_{k}-2)\gamma\beta_{c-d,d-1}=(i_{k}-2)\alpha\beta_{c-d,d-1}=c\beta_{c-d,d-1}=(2c-i_{k}\alpha)\beta_{c-d,d-1}=i_{k}\gamma\beta_{c-d,d-1} 
$
and so $i_{k}-1\in \Inv(\gamma\beta_{c-d,d-1})$.
Conversely, if $i_{k}-1\notin \Inv(\alpha)$ then $(i_{k}-2)\alpha\ne i_{k}\alpha=c$ and, as $(i_{k}-1)\alpha=i_{k}\alpha-1=c-1$, we have
$(i_{k}-2)\alpha=c-2$. Hence 
$
(i_{k}-2)\gamma\beta_{c-d,d-1}=(i_{k}-2)\alpha\beta_{c-d,d-1}=(c-2)\beta_{c-d,d-1}=c-2\ne c=
c\beta_{c-d,d-1}=(2c-c)\beta_{c-d,d-1}=(2c-i_k\alpha)\beta_{c-d,d-1}=i_{k}\gamma\beta_{c-d,d-1}
$
and so $i_{k}-1\notin \Inv(\gamma\beta_{c-d,d-1})$. 

Analogously, if $i_{\ell}+1\in \Inv(\alpha)$ then $i_{\ell}\alpha=(i_{\ell}+2)\alpha=d$, whence 
$
i_{\ell}\gamma\beta_{c-d,d-1}=(2c-i_{\ell}\alpha)\beta_{c-d,d-1}=(2c-d)\beta_{c-d,d-1}= d = ((i_{\ell}+2)\alpha+2c-2d)\beta_{c-d,d-1}=(i_{\ell}+2)\gamma\beta_{c-d,d-1}
$
and so $i_{\ell}+1\in \Inv(\gamma\beta_{c-d,d-1})$.
Conversely, if $i_{\ell}+1\notin \Inv(\alpha)$ then $d=i_{\ell}\alpha\ne(i_{\ell}+2)\alpha$. As $(i_{\ell}+1)\alpha=i_{\ell}\alpha+1=d+1$, then
$(i_{\ell}+2)\alpha=d+2$, whence 
$
i_{\ell}\gamma\beta_{c-d,d-1}=d\ne d+2=(2c-d+2)\beta_{c-d,d-1}=((i_{\ell}+2)\alpha+2c-2d)\beta_{c-d,d-1}=(i_{\ell}+2)\gamma\beta_{c-d,d-1}
$
and so $i_{\ell}+1\notin \Inv(\gamma\beta_{c-d,d-1})$.

It remains to show that $i_{k},i_{\ell}\in \Inv(\gamma\beta_{c-d,d-1})$. 
As $c, 2c-d \in \Inv(\beta_{c-d,d-1})$, $(i_{k}-1)\alpha = c-1 = (i_{k}+1)\alpha$ and 
$(i_{\ell}-1)\alpha = d+1 = (i_{\ell}+1)\alpha$, we have
$
(i_{k}-1)\gamma\beta_{c-d,d-1}=(i_{k}-1)\alpha\beta_{c-d,d-1}=(c-1)\beta_{c-d,d-1}= c-1 
=(c+1)\beta_{c-d,d-1}=(2c-(i_{k}+1)\alpha)\beta_{c-d,d-1}=(i_{k}+1)\gamma\beta_{c-d,d-1}, 
$
as well as
$
(i_{\ell}-1)\gamma\beta_{c-d,d-1}=(2c-(i_{\ell}-1)\alpha)\beta_{c-d,d-1}=(2c-d-1)\beta_{c-d,d-1}=d+1
=(2c-d+1)\beta_{c-d,d-1} = ((i_{\ell}+1)\alpha+2c-2d)\beta_{c-d,d-1}=(i_{\ell}+1)\gamma\beta_{c-d,d-1}.
$

Thus, we showed that $\Inv(\alpha)=\Inv(\gamma\beta_{c-d,d-1})$. Since $\gamma \in \langle A' \rangle$, then
$\gamma\beta_{c-d,d-1} \in \langle A' \rangle$ and so, by Lemma \ref{rem},
we have $\alpha \in \langle A' \rangle$, as required. 
\end{proof}

\medskip

Now, let us consider 
$$
A=\{\tau\}\cup\left\{\alpha_i\mid i=1,\ldots,\lfloor\frac{n-1}{2}\rfloor\right\}\cup\left\{\beta_{j,i}\mid j=1,\ldots,\lfloor\frac{n-3}{3}\rfloor,\, i=1,\ldots,\lfloor\frac{n-3j-1}{2}\rfloor\right\}.
$$

The next two lemmas together with the previous one show that $A$ is a generating set of $\End P_n$.  

\begin{lemma}\label{le1}
$\{\alpha_i \mid i = 1,2,\ldots,n-2\} \subseteq \langle A \rangle$.
\end{lemma}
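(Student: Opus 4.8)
The plan is to obtain each $\alpha_i$ with a large fold index from one with a small fold index by left-multiplying with the reflection $\tau$. The key is the identity
$$
\alpha_i=\tau\,\alpha_{n-i-1},\qquad i=1,\ldots,n-2.
$$
Geometrically this is clear: $\alpha_i$ is the ``valley'' endomorphism whose image sequence $1\alpha,2\alpha,\ldots,n\alpha$ descends by unit steps from $i+1$ down to $1$ (its single turning point being at the vertex $i+1$) and then climbs back up, and pre-composing $\alpha_{n-i-1}$ with the domain reversal $\tau$ produces exactly the same $\pm1$-walk. I would prove the identity directly from the piecewise description $x\alpha_j=j+2-x$ on $\{1,\ldots,j+1\}$ and $x\alpha_j=x-j$ on $\{j+1,\ldots,n\}$: writing $j=n-i-1$ and evaluating $x(\tau\alpha_j)=(n+1-x)\alpha_j$ separately for $x\le i+1$ and for $x\ge i+1$ returns precisely the two branches $i+2-x$ and $x-i$ of $\alpha_i$.

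Granting the identity, the argument splits on the size of the fold index. If $i\le\lfloor\frac{n-1}{2}\rfloor$ then $\alpha_i\in A$ by definition and there is nothing to prove. If $i>\lfloor\frac{n-1}{2}\rfloor$, I set $i'=n-i-1$ and write $\alpha_i=\tau\alpha_{i'}$; since $\tau\in A$, it suffices to check that $\alpha_{i'}\in A$, that is, $1\le i'\le\lfloor\frac{n-1}{2}\rfloor$. The bound $i'\ge1$ is just $i\le n-2$, and for the upper bound I would use the elementary floor inequality $n-2\le 2\lfloor\frac{n-1}{2}\rfloor$ (an equality for $n$ even, strict for $n$ odd): from $i\ge\lfloor\frac{n-1}{2}\rfloor+1$ it yields $i'=n-i-1\le n-\lfloor\frac{n-1}{2}\rfloor-2\le\lfloor\frac{n-1}{2}\rfloor$. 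Hence $\alpha_{i'}\in A$ and $\alpha_i=\tau\alpha_{i'}\in\langle A\rangle$.

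The only real content is the verification of $\alpha_i=\tau\alpha_{n-i-1}$, which is a short two-case computation with the explicit formulas; the index check for $i'$ is routine parity bookkeeping. Note that Lemma \ref{rem} is \emph{not} needed here --- it concerns the larger sets $A'$ and $A''$ --- precisely because the identity produces $\alpha_i$ on the nose rather than merely some other endomorphism with the same inversion set. This is the role of the lemma: together with $\tau\in A$ it bootstraps the entire family $\{\alpha_i\mid i=1,\ldots,n-2\}$, and hence $A''\subseteq\langle A\rangle$, from the smaller generating set $A$.
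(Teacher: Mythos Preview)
Your proof is correct and follows essentially the same approach as the paper: both rely on the identity $\alpha_i=\tau\alpha_{n-i-1}$ to reduce the large-index case to the small-index one already in $A$. The paper simply lists the identities (splitting into $n$ odd and $n$ even) without verifying them, whereas you supply the two-case computation and a uniform floor-inequality argument for the index bound; the content is the same.
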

\begin{proof}
If $i=1,\ldots,\lfloor\frac{n-1}{2}\rfloor$ then $\alpha_i \in A$ and so $\alpha_i\in\langle A \rangle$. 
On the other hand, we have 
$$
\alpha_{\lfloor\frac{n-1}{2}\rfloor + 1} = \tau\alpha_{\lfloor\frac{n-1}{2}\rfloor - 1},  \ldots, ~
\alpha_{n-3} = \tau\alpha_2, ~ \alpha_{n-2} = \tau\alpha_1,
$$
if $n$ is odd, and
$$
\alpha_{\lfloor\frac{n-1}{2}\rfloor + 1} = \tau\alpha_{\lfloor\frac{n-1}{2}\rfloor}, \ldots, ~
\alpha_{n-3} = \tau\alpha_2, ~ \alpha_{n-2} = \tau\alpha_1,
$$
if $n$ is even. Hence, we also have $\alpha_i\in\langle A \rangle$, for $i =\lfloor\frac{n-1}{2}\rfloor + 1,\ldots, n-2$, as required. 
\end{proof}

\begin{lemma}\label{le}
$\left\{\beta_{j,i}\mid j=1,\ldots,\lfloor\frac{n-3}{3}\rfloor,\, i=1,\ldots,n-3j-2\right\}\subseteq \langle A\rangle$.
\end{lemma}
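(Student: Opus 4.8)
The plan is to reduce everything to inversion sets and to exploit that pre-composition with $\tau$ reflects inversions. First I would record the two structural facts I need. By Lemma \ref{le1} together with $\tau\in A$, the whole set $A''=\{\tau\}\cup\{\alpha_i\mid i=1,\ldots,n-2\}$ lies in $\langle A\rangle$. Consequently, running the argument of Lemma \ref{rem} with $A$ in place of $A'$ — which is legitimate precisely because $A''\subseteq\langle A\rangle$ — yields the key reduction: for any $\gamma,\delta\in\End P_n$ with $\Inv(\gamma)=\Inv(\delta)$, one has $\gamma\in\langle A\rangle$ if and only if $\delta\in\langle A\rangle$. Indeed, $\Inv(\gamma)=\Inv(\delta)$ gives $\ker(\gamma)=\ker(\delta)$ by Lemma \ref{invker}, and Lemma \ref{le2} then yields $\delta\in\langle A'',\gamma\rangle\subseteq\langle A\rangle$ whenever $\gamma\in\langle A\rangle$.

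Next I would compute inversion sets explicitly. Reading off the displayed two-sided ``fold'' shape of $\beta_{j,i}$, its only inversions occur at the peak $i+j+1$ and at the valley $i+2j+1$, so $\Inv(\beta_{j,i})=\{i+j+1,i+2j+1\}$. I would also record how $\tau$ acts on inversions from the left: since $x(\tau\alpha)=(n+1-x)\alpha$, a position $x$ is an inversion of $\tau\alpha$ exactly when $n+1-x$ is an inversion of $\alpha$; hence $\Inv(\tau\alpha)=\{n+1-i\mid i\in\Inv(\alpha)\}$ for every $\alpha\in\End P_n$.

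With these in hand the lemma splits into two ranges of $i$. For $i=1,\ldots,\lfloor\frac{n-3j-1}{2}\rfloor$ there is nothing to do, since $\beta_{j,i}\in A$. For $i=\lfloor\frac{n-3j-1}{2}\rfloor+1,\ldots,n-3j-2$ I would set $i'=n-3j-1-i$ and check the two inequalities $1\le i'\le\lfloor\frac{n-3j-1}{2}\rfloor$, so that $\beta_{j,i'}\in A\subseteq\langle A\rangle$; here the lower bound comes from $i\le n-3j-2$ and the upper bound is a short floor/ceiling computation using $i>\lfloor\frac{n-3j-1}{2}\rfloor$. Then $\tau\beta_{j,i'}\in\langle A\rangle$, and combining the two inversion computations gives $\Inv(\tau\beta_{j,i'})=\{n+1-(i'+j+1),n+1-(i'+2j+1)\}=\{i+2j+1,i+j+1\}=\Inv(\beta_{j,i})$. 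The reduction from the first paragraph now forces $\beta_{j,i}\in\langle A\rangle$, completing the proof; note that this dispenses with any need to compute $\tau\beta_{j,i'}$ as a transformation.

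The calculations above are routine; the one place to be careful — the main obstacle, such as it is — is matching the index ranges, i.e.\ verifying that the reflection $i\mapsto n-3j-1-i$ carries the ``upper half'' of the index set $\{1,\ldots,n-3j-2\}$ exactly into the ``lower half'' $\{1,\ldots,\lfloor\frac{n-3j-1}{2}\rfloor\}$ actually present in $A$, including the parity cases where $n-3j-1$ is even versus odd.
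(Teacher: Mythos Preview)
Your proof is correct and follows essentially the same route as the paper: both use the reflection $i\mapsto i'=n-3j-1-i$ to pair $\beta_{j,i}$ with $\tau\beta_{j,i'}$ for $i$ in the upper half of the index range, and then invoke Lemmas~\ref{le2} and~\ref{le1} (the paper phrases the matching via $\ker(\beta_{j,i})=\ker(\tau\beta_{j,i'})$, while you go through $\Inv$ and Lemma~\ref{invker}, which is equivalent). Your explicit verification that $i'$ lands in $\{1,\ldots,\lfloor\frac{n-3j-1}{2}\rfloor\}$ and your inversion computation $\Inv(\tau\beta_{j,i'})=\{i+j+1,i+2j+1\}$ are exactly the ``routine matter'' the paper leaves to the reader.
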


\begin{proof}
Let $j=1,\ldots,\lfloor\frac{n-3}{3}\rfloor$. 
If $i=1,\ldots,\lfloor\frac{n-3j-1}{2}\rfloor$ then $\beta_{j,i} \in A$ and so $\beta_{j,i} \in \langle A\rangle$. 
On the other hand, it is a routine matter to verify that
$$
\ker(\beta_{j,\lfloor\frac{n-3j-1}{2}\rfloor + 1}) = \ker(\tau\beta_{j,\lfloor\frac{n-3j-1}{2}\rfloor}), ~
\ker(\beta_{j,\lfloor\frac{n-3j-1}{2}\rfloor + 2}) = \ker(\tau\beta_{j,\lfloor\frac{n-3j-1}{2}\rfloor - 1}), \ldots,$$
$$
\ker(\beta_{j,n-3j-3}) = \ker(\tau\beta_{j,2}), ~ \ker(\beta_{j,n-3j-2}) = \ker(\tau\beta_{j,1}),
$$
if $n-3j-2$ is even, and
$$
\ker(\beta_{j,\lfloor\frac{n-3j-1}{2}\rfloor + 1}) = \ker(\tau\beta_{j,\lfloor\frac{n-3j-1}{2}\rfloor - 1}), ~ 
\ker(\beta_{j,\lfloor\frac{n-3j-1}{2}\rfloor + 2}) = \ker(\tau\beta_{j,\lfloor\frac{n-3j-1}{2}\rfloor - 2}), \ldots,
$$
$$
\ker(\beta_{j,n-3j-3}) = \ker(\tau\beta_{j,2}), ~ \ker(\beta_{j,n-3j-2}) = \ker(\tau\beta_{j,1}), 
$$
if $n-3j-2$ is odd. Thus, in view of Lemmas \ref{le2} and \ref{le1}, for $i =\lfloor\frac{n-3j-1}{2}\rfloor + 1,\ldots, n-3j-2$, we conclude that also   
$\beta_{j,i} \in \langle A\rangle$,  as required. 
\end{proof}

So, as an immediate consequence of Lemmas \ref{ger}, \ref{le1} and \ref{le}, we obtain: 

\begin{proposition}\label{gend}
The set $A$ generates $\End P_n$. 
Moreover, $|A|=1+\lfloor\frac{n-1}{2}\rfloor+\sum_{j=1}^{\lfloor\frac{n-3}{3}\rfloor}\lfloor\frac{n-3j-1}{2}\rfloor$. 
\end{proposition}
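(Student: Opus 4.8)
The plan is to establish the two assertions separately, treating generation by a double inclusion and then settling the cardinality by a disjointness-and-counting argument. For generation, the key observation is that $A$ is built from the same three blocks as $A'$, but with the index ranges for the $\alpha_i$ and the $\beta_{j,i}$ truncated: $A$ uses $\alpha_i$ only up to $i=\lfloor\frac{n-1}{2}\rfloor\le n-2$ and $\beta_{j,i}$ only up to $i=\lfloor\frac{n-3j-1}{2}\rfloor\le n-3j-2$ (these inequalities hold on the stated ranges since $3j\le n-3$). Hence $A\subseteq A'$, and so $\langle A\rangle\subseteq\langle A'\rangle=\End P_n$ by Lemma \ref{ger}.

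For the reverse inclusion I would show $A'\subseteq\langle A\rangle$, which suffices because $\End P_n=\langle A'\rangle$. The generators of $A'$ split into exactly three families: $\tau$, which already lies in $A$; the full family $\{\alpha_i\mid i=1,\ldots,n-2\}$, which is contained in $\langle A\rangle$ by Lemma \ref{le1}; and the full family $\{\beta_{j,i}\mid j=1,\ldots,\lfloor\frac{n-3}{3}\rfloor,\, i=1,\ldots,n-3j-2\}$, which is contained in $\langle A\rangle$ by Lemma \ref{le}. Combining these, $A'\subseteq\langle A\rangle$, whence $\End P_n=\langle A'\rangle\subseteq\langle A\rangle$, and the two inclusions give $\End P_n=\langle A\rangle$. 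This part is genuinely immediate once Lemmas \ref{ger}, \ref{le1} and \ref{le} are in hand.

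For the cardinality, I would write $A$ as the disjoint union of $\{\tau\}$, $\{\alpha_i\mid i=1,\ldots,\lfloor\frac{n-1}{2}\rfloor\}$, and $\{\beta_{j,i}\mid j=1,\ldots,\lfloor\frac{n-3}{3}\rfloor,\, i=1,\ldots,\lfloor\frac{n-3j-1}{2}\rfloor\}$, and then add the three sizes $1$, $\lfloor\frac{n-1}{2}\rfloor$ and $\sum_{j=1}^{\lfloor\frac{n-3}{3}\rfloor}\lfloor\frac{n-3j-1}{2}\rfloor$. The only real content is verifying that these three blocks are pairwise disjoint and that there are no repetitions within a block, so that no coincidence collapses the count. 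The cross-block checks are quick: $\tau$ is a bijection whereas every $\alpha_i$ and $\beta_{j,i}$ is non-injective (so $\tau$ is distinct from both families), while $1\alpha_i=i+1\ge2$ and $1\beta_{j,i}=1$ separate the $\alpha$-block from the $\beta$-block by evaluating at the point $1$. Distinctness inside each family follows from the explicit definitions (different $i$ give $\alpha_i$ with different behaviour at $1$, and different pairs $(j,i)$ give $\beta_{j,i}$ with different kernels or images).

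The main, albeit minor, obstacle is precisely this bookkeeping for the cardinality: confirming that the three displayed index sets really are disjoint and repetition-free, so that $|A|$ equals the sum of the three block sizes rather than something smaller. Everything else is a direct consequence of the previously established lemmas.
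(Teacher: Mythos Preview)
Your proposal is correct and follows essentially the same approach as the paper, which simply states that the result is an immediate consequence of Lemmas \ref{ger}, \ref{le1} and \ref{le}. You have just made explicit the double inclusion $\langle A\rangle\subseteq\langle A'\rangle=\End P_n$ and $A'\subseteq\langle A\rangle$, and supplied the (routine) disjointness verification for the cardinality formula that the paper leaves implicit.
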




\medskip 

Let 
$$
\gamma_i=\left(
\begin{array}{ccccccc}
1&\cdots&i&i+1&i+2&\cdots&n\\
1&\cdots&i&i&i+1&\cdots&n-1
\end{array}
\right),
$$
for $i=1,\ldots,\lfloor\frac{n}{2}\rfloor$. 
Let 
$$
B=A\cup\left\{\gamma_i\mid i=1,\ldots,\lfloor\frac{n}{2}\rfloor\right\}.
$$

Let $\alpha\in\wEnd P_n$. We say that $i\in\{1,\ldots,n-1\}$ is a \textit{repetition} of $\alpha$ if $(i)\alpha=(i+1)\alpha$.  
Denote by 
$
\rep(\alpha)
$
the number of repetitions of $\alpha$. 
This notion will be used in our next result. 
Observe that, clearly, $\alpha\in\End P_n$ if and only if $\rep(\alpha)=0$. 

\begin{proposition} \label{gwend} 
The set $B$ generates $\wEnd P_n$.
\end{proposition}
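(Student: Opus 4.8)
The plan is to show that $B=A\cup\{\gamma_i\mid i=1,\ldots,\lfloor\frac n2\rfloor\}$ generates $\wEnd P_n$ by an induction on $\rep(\alpha)$, the number of repetitions of $\alpha$. The base case $\rep(\alpha)=0$ is exactly the statement that $\alpha\in\End P_n$, which is handled by Lemma \ref{ger} together with Lemmas \ref{le1} and \ref{le} (i.e. Proposition \ref{gend}), since $A\subseteq B$. The role of the new generators $\gamma_i$ is to ``absorb'' one repetition at a time: each $\gamma_i$ has exactly one repetition (at position $i$) and is otherwise an order-preserving, distance-one map, so multiplying on the right by a suitable $\gamma_i$ (conjugated by $\tau$ where necessary to reach repetitions in the ``decreasing'' part of the image) should let me write an arbitrary weak endomorphism with $r+1$ repetitions as a product of a weak endomorphism with $r$ repetitions and a generator from $B$.

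Concretely, for the inductive step I would take $\alpha\in\wEnd P_n$ with $\rep(\alpha)=r+1\ge1$ and locate a repetition, say the smallest index $i$ with $i\alpha=(i+1)\alpha$. By Proposition \ref{interval}, $\im(\alpha)$ is an interval, and by Proposition \ref{form} consecutive images differ by at most $1$; this lets me understand the local shape of $\alpha$ around a repetition. The idea is to factor $\alpha=\alpha'\gamma$ (or $\alpha=\gamma\alpha'$, whichever direction is cleaner) where $\gamma$ is one of the $\gamma_i$ (possibly after a $\tau$-conjugation to handle the reversing case, exactly as the $\alpha_i$ were extended to all indices in Lemma \ref{le1}), chosen so that $\alpha'\in\wEnd P_n$ has $\rep(\alpha')=r$ and the same ``non-repetition structure'' as $\alpha$ elsewhere. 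The cleanest route is probably to compose $\alpha$ with a collapsing map that fuses the pair realizing one repetition into a single step, thereby reducing the image interval by one on the appropriate side; since the $\gamma_i$ range over $i=1,\ldots,\lfloor\frac n2\rfloor$ and their $\tau$-conjugates cover the remaining positions, every repetition position can be reached. Then induction gives $\alpha'\in\langle B\rangle$, and $\gamma\in B$, so $\alpha=\alpha'\gamma\in\langle B\rangle$.

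I would verify the factorization the same way Lemma \ref{ger} verified its factorizations: check that the candidate $\gamma\in B$, that $\alpha'$ defined by the intended equation lies in $\wEnd P_n$ (using Proposition \ref{form}, i.e. $|x\alpha'-(x+1)\alpha'|\le1$ for all $x$, paying special attention to the two indices adjacent to the modified position), and that $\rep(\alpha')=r$. The main obstacle I expect is bookkeeping at the seam: removing one repetition must not accidentally create or destroy another, and it must interact correctly with whatever inversions $\alpha$ already has, so the precise definition of $\alpha'$ near the fused pair needs care. A secondary subtlety is the reversing (decreasing-image) case, where a $\gamma_i$ alone does not suffice and I must pre- or post-compose with $\tau$; I would dispose of this exactly as in Lemma \ref{le1}, reducing it to the increasing case by the relation $\alpha=(\tau\alpha)\tau$ or by noting $\gamma_i$ and $\tau\gamma_i\tau$-type identities. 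Once the single-repetition reduction is established, the induction closes immediately and Proposition \ref{gwend} follows.
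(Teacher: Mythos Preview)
Your plan is essentially the paper's proof: induction on $\rep(\alpha)$, base case via Proposition~\ref{gend}, and peeling off one repetition at a time with a $\gamma_i$. Two small corrections will save you from the bookkeeping worries you anticipate. First, commit to the \emph{left} factorization: if $i$ is a repetition of $\alpha$, set
\[
\beta=\begin{pmatrix}
1&\cdots&i&i+1&\cdots&n-1&n\\
1\alpha&\cdots&i\alpha&(i+2)\alpha&\cdots&n\alpha&b
\end{pmatrix},\qquad b=n\alpha\pm1,
\]
and check directly that $\beta\in\wEnd P_n$, $\rep(\beta)=\rep(\alpha)-1$, and $\alpha=\gamma_i\beta$; no seam issues arise. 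Second, there is no ``reversing (decreasing-image) case'': the $\tau$-conjugation is needed only because $B$ contains $\gamma_i$ for $i\le\lfloor n/2\rfloor$, and the missing ones are obtained by the identity $\gamma_{n-i}=\tau\gamma_i\tau\gamma_1$, after which any repetition index $i\in\{1,\ldots,n-1\}$ is available for the factorization above.
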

\begin{proof}
First, for $i=\lfloor\frac{n}{2}\rfloor+1,\ldots,n-1$, consider also 
$
\gamma_i=\left(
\begin{array}{ccccccc}
1&\cdots&i&i+1&i+2&\cdots&n\\
1&\cdots&i&i&i+1&\cdots&n-1
\end{array}
\right). 
$
Then, it is easy to check that $\gamma_{n-i}=\tau\gamma_i\tau\gamma_1$, for all $i=1,\ldots,\lfloor\frac{n}{2}\rfloor$. 
Hence $\gamma_1, \ldots, \gamma_{n-1}$ belong to the monoid generated by $B$. 

Now, in order to show that any $\alpha\in\wEnd P_n$ belongs to the monoid generated by $B$, we proceed by induction on $\rep(\alpha)$. 

Let $\alpha\in\wEnd P_n$ be such that $\rep(\alpha)=0$. Then, as observed above, $\alpha\in\End P_n$ and so, by Proposition \ref{gend}, we may conclude that $\alpha$ belongs to the monoid generated by $B$. 

Let $k\ge0$ and, by induction hypothesis, admit that 
$\alpha$ belongs to the monoid generated by $B$, for all $\alpha\in\wEnd P_n$ such that 
$\rep(\alpha)=k$ (notice that, for such $\alpha$ to exist, we must have $k\le n-1$). 

Let $\alpha\in\wEnd P_n$ be such that $\rep(\alpha)=k+1$ (by supposing that $k\le n-2$). 
Let $i\in\{1,\ldots,n-1\}$ be a repetition of $\alpha$ and take 
$
\beta=\left(
\begin{array}{ccccccc}
1&\cdots&i&i+1&\cdots&n-1&n\\
1\alpha&\cdots&i\alpha&(i+2)\alpha&\cdots&n\alpha& b
\end{array}
\right),  
$
where $b=n\alpha-1$, if $n\alpha\ge2$, or $b=n\alpha+1$, otherwise. 
It is clear that $\beta\in\wEnd P_n$ and $\rep(\beta)=\rep(\alpha)-1=k$, whence $\beta$ belongs to the monoid generated by $B$, by induction hypothesis. On the other hand, it is a routine matter to check that $\alpha=\gamma_i\beta$ and so we may conclude that 
$\alpha$ belongs to the monoid generated by $B$, as required. 
\end{proof} 

Observe that $|B|=n+\sum_{j=1}^{\lfloor\frac{n-3}{3}\rfloor}\lfloor\frac{n-3j-1}{2}\rfloor$.

\medskip 

In order to compute the ranks of $\End P_n$ and $\wEnd P_n$, we start by proving a series of lemmas involving the notion of inversion. 

\begin{lemma}\label{1w}
Let $\alpha,\beta\in\wEnd P_n$ be such that $\alpha\beta\in\End P_n$. 
Then, we have:
\begin{enumerate}
\item $\alpha\in\End P_n$;
\item   
$
\Inv(\alpha)\subseteq\Inv(\alpha\beta); 
$ 
\item   
$
\{i\in\{2,\ldots,n-1\}\mid i\alpha\in\Inv(\beta)\}\subseteq\Inv(\alpha\beta).
$ 
\end{enumerate} 
\end{lemma}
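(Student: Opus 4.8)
The plan is to establish the three assertions in the order given, each of them reducing to the remark recorded just before the statement, namely that for an endomorphism $\delta\in\End P_n$ one has $i\in\Inv(\delta)$ if and only if $(i-1)\delta=(i+1)\delta$. I will apply this characterization repeatedly, most often to $\delta=\alpha\beta$.

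For part (1) I would argue by contraposition. Since $\alpha\in\wEnd P_n$, Proposition \ref{form} already gives $|i\alpha-(i+1)\alpha|\le1$ for every $i$, so it suffices to rule out $i\alpha=(i+1)\alpha$. But if $i\alpha=(i+1)\alpha$ held for some $i$, then applying $\beta$ would give $i(\alpha\beta)=(i+1)(\alpha\beta)$, contradicting $|i(\alpha\beta)-(i+1)(\alpha\beta)|=1$, which holds because $\alpha\beta\in\End P_n$. Hence $|i\alpha-(i+1)\alpha|=1$ for every $i$, that is, $\alpha\in\End P_n$.

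For part (2) I take $i\in\Inv(\alpha)$. Since $\alpha\in\End P_n$ by part (1), the remark gives $(i-1)\alpha=(i+1)\alpha$; composing with $\beta$ immediately yields $(i-1)(\alpha\beta)=(i+1)(\alpha\beta)$, and since $\alpha\beta\in\End P_n$ the remark (now applied to $\alpha\beta$) gives $i\in\Inv(\alpha\beta)$. For part (3) I fix $i\in\{2,\ldots,n-1\}$ with $j:=i\alpha\in\Inv(\beta)$, so that $(j-1)\beta=(j+1)\beta$ by the definition of an inversion of $\beta$. The key observation is that, because $\alpha\in\End P_n$, both $(i-1)\alpha$ and $(i+1)\alpha$ lie in $\{i\alpha-1,i\alpha+1\}=\{j-1,j+1\}$. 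Consequently each of $(i-1)\alpha$ and $(i+1)\alpha$ is sent by $\beta$ to the common value $(j-1)\beta=(j+1)\beta$, so $(i-1)(\alpha\beta)=(i+1)(\alpha\beta)$; as before, $\alpha\beta\in\End P_n$ then forces $i\in\Inv(\alpha\beta)$.

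The main (and essentially only) obstacle is the geometric observation in part (3): one must notice that $j\in\Inv(\beta)$ supplies exactly the collapse $(j-1)\beta=(j+1)\beta$ of the two neighbours of $j$, while $\alpha\in\End P_n$ guarantees that the two neighbours $i-1,i+1$ of $i$ are mapped by $\alpha$ precisely into those two neighbours of $j$. These two facts dovetail irrespective of which of $j\pm1$ each of $(i\mp1)\alpha$ happens to equal, so in fact no case analysis is needed; this uniformity is what makes the argument short.
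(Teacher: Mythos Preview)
Your argument is correct and follows the same approach as the paper's proof: contrapositive for (1), direct composition for (2), and the neighbour observation for (3). The only difference is cosmetic: in (3) the paper splits into the cases $(i-1)\alpha=(i+1)\alpha$ and $(i-1)\alpha\ne(i+1)\alpha$, whereas you observe uniformly that $(i\pm1)\alpha\in\{j-1,j+1\}$ and $(j-1)\beta=(j+1)\beta$, which collapses both cases at once.
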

\begin{proof}
1. If $\alpha\in\wEnd P_n\setminus\End P_n$ then $i\alpha=(i+1)\alpha$, for some $i\in\{1,\ldots,n-1\}$, and so $i\alpha\beta=(i+1)\alpha\beta$, 
whence $\alpha\beta\not\in\End P_n$, which is a contradiction. Thus, $\alpha\in\End P_n$. 

\smallskip 

2. Let $i\in\Inv(\alpha)$. Then $i\in\{2,\ldots,n-1\}$ and $(i-1)\alpha=(i+1)\alpha$. Hence $(i-1)\alpha\beta=(i+1)\alpha\beta$ and so 
$i\in\Inv(\alpha\beta)$. 

\smallskip 

3. Let $i\in\{2,\ldots,n-1\}$ be such that $i\alpha\in\Inv(\beta)$. 
Then $2\le i\alpha\le n-1$ and $(i\alpha-1)\beta=(i\alpha+1)\beta$. 

If $(i-1)\alpha=(i+1)\alpha$ then $(i-1)\alpha\beta=(i+1)\alpha\beta$ and so $i\in\Inv(\alpha\beta)$. 

Let us suppose that $(i-1)\alpha\ne(i+1)\alpha$. Then, either $(i-1)\alpha=i\alpha-1$ and $(i+1)\alpha=i\alpha+1$ or 
$(i-1)\alpha=i\alpha+1$ and $(i+1)\alpha=i\alpha-1$, from which follows that 
$(i-1)\alpha\beta=(i\alpha\mp1)\beta=(i\alpha\pm1)\beta=(i+1)\alpha\beta$ and so $i\in\Inv(\alpha\beta)$, as required. 
\end{proof}

\begin{lemma}\label{1wpp}
Let $\alpha,\beta\in\wEnd P_n$. Let $i\in\Inv(\alpha\beta)$ be such that $i\not\in\Inv(\alpha)$. Then $i\alpha\in\Inv(\beta)$. 
\end{lemma}
\begin{proof}
As $i\in\Inv(\alpha\beta)$, we have $2\le i\le n-1$ and $(i-1)\alpha\beta=(i+1)\alpha\beta\ne i\alpha\beta$. 
In addition, as $2\le i\le n-1$ and $i\not\in\Inv(\alpha)$, we have $(i-1)\alpha\ne(i+1)\alpha$ or $(i-1)\alpha=(i+1)\alpha=i\alpha$. 
If $(i-1)\alpha=(i+1)\alpha=i\alpha$ then $(i-1)\alpha\beta=(i+1)\alpha\beta=i\alpha\beta$, which is a contradiction. 
Hence $(i-1)\alpha\ne(i+1)\alpha$. Moreover, if $(i-1)\alpha=i\alpha$ or $(i+1)\alpha=i\alpha$ then $(i-1)\alpha\beta=i\alpha\beta$ 
or $(i+1)\alpha\beta=i\alpha\beta$, which also is a contradiction. Thus, either $(i-1)\alpha=i\alpha-1$ and $(i+1)\alpha=i\alpha+1$ or 
$(i-1)\alpha=i\alpha+1$ and $(i+1)\alpha=i\alpha-1$ (and, in both cases, we must have $2\le i\alpha\le n-1$), whence 
$(i\alpha\mp1)\beta=(i-1)\alpha\beta=(i+1)\alpha\beta=(i\alpha\pm1)\beta$ and so $(i\alpha-1)\beta=(i\alpha+1)\beta$. 
Since $(i+1)\alpha\beta\ne i\alpha\beta$, we have $(i\alpha-1)\beta=(i\alpha+1)\beta\ne i\alpha\beta$, 
i.e. $i\alpha\in\Inv(\beta)$, as required. 
\end{proof}

\begin{lemma}\label{1wppp}
Let $\alpha\in\wEnd P_n$ and $i\in\{2,\ldots,n-1\}$. 
Then $i\in\Inv(\alpha)$ if and only if $n-i+1\in\Inv(\tau\alpha)$. 
\end{lemma}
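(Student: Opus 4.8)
The plan is to prove this by a direct unwinding of the definitions, exploiting the fact that $\tau$ is the order-reversing permutation $x\tau = n-x+1$ (as given at the start of Section \ref{mainaim}). Since maps are composed left to right, left-multiplication by $\tau$ simply reflects the domain index, so I expect that left-composing with $\tau$ should turn an inversion at position $i$ into an inversion at the mirror position $n-i+1$.

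First I would set $j = n-i+1$ and note that, for $i \in \{2,\ldots,n-1\}$, we also have $j \in \{2,\ldots,n-1\}$ (the endpoints $i=2$ and $i=n-1$ map to $j=n-1$ and $j=2$), so that $j$ is a legitimate candidate inversion of $\tau\alpha$. Then I would compute the three relevant values using $x(\tau\alpha) = (x\tau)\alpha = (n-x+1)\alpha$. Substituting $j = n-i+1$, a short calculation gives
\begin{gather*}
(j-1)(\tau\alpha) = (n-(j-1)+1)\alpha = (i+1)\alpha, \qquad
j(\tau\alpha) = (n-j+1)\alpha = i\alpha, \\
(j+1)(\tau\alpha) = (n-(j+1)+1)\alpha = (i-1)\alpha.
\end{gather*}

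From these three identities the equivalence is immediate: by definition $j = n-i+1 \in \Inv(\tau\alpha)$ holds exactly when $(j-1)(\tau\alpha) = (j+1)(\tau\alpha) \neq j(\tau\alpha)$, which by the above equals the condition $(i+1)\alpha = (i-1)\alpha \neq i\alpha$, i.e. precisely $i \in \Inv(\alpha)$. Reading the chain of equalities in either direction yields both implications simultaneously, so I would present it as a single \emph{if and only if} rather than proving the two directions separately.

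There is no real obstacle here; the statement is a routine symmetry computation and the only points requiring minor care are respecting the right-hand (left-to-right) composition convention, so that $x(\tau\alpha)$ is correctly read as $(x\tau)\alpha = (n-x+1)\alpha$, and checking that the index range $\{2,\ldots,n-1\}$ is preserved under $i \mapsto n-i+1$ so that the definition of inversion applies at $j$.
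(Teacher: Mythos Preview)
Your proposal is correct and follows essentially the same approach as the paper: both arguments unwind the definitions directly, using $x(\tau\alpha)=(n-x+1)\alpha$ to translate the inversion condition at $i$ into the same condition at $n-i+1$, after first observing that the index range $\{2,\ldots,n-1\}$ is preserved. The only difference is cosmetic --- you introduce the abbreviation $j=n-i+1$, whereas the paper works directly with the expression $n-i+1$ throughout its chain of equivalences.
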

\begin{proof}
First, notice that $2\le i\le n-1$ if and only if $2\le n-i+1\le n-1$. Then 
$$
\begin{array}{rcl}
i\in\Inv(\alpha) & \Leftrightarrow & (i-1)\alpha=(i+1)\alpha\ne i\alpha \\
&   \Leftrightarrow & (n-i+2)\tau\alpha=(n-i)\tau\alpha\ne (n-i+1)\tau\alpha \\
&   \Leftrightarrow & ((n-i+1)+1)\tau\alpha=((n-i+1)-1)\tau\alpha\ne (n-i+1)\tau\alpha \\
&   \Leftrightarrow &n-i+1\in\Inv(\tau\alpha),
\end{array}
$$
as required. 
\end{proof}

The next lemma is clear. 

\begin{lemma}\label{2w}
Let $C$ be a generating set of $\End P_n$ or of $\wEnd P_n$. Then $\tau\in C$. 
\end{lemma}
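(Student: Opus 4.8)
The plan is to exploit the fact that $\tau$ is a permutation while being, apart from the identity, the only nontrivial bijection available inside these monoids. First I would recall from Theorem \ref{aut} that $\Aut P_n=\{1,\tau\}$, and observe that every bijective element of $\wEnd P_n$ (and hence of $\End P_n$) is in fact an automorphism: if $\alpha$ is a bijection then $u\alpha\ne v\alpha$ whenever $u\ne v$, so the weak-endomorphism condition collapses to the endomorphism condition, and, as noted in the Introduction, any bijective endomorphism of a finite graph is an automorphism. Thus the only permutations lying in $\End P_n$ or in $\wEnd P_n$ are the identity and $\tau$.

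Next I would use that image size is submultiplicative under composition, that is $|\im(\sigma\rho)|\le\min\{|\im(\sigma)|,|\im(\rho)|\}$ for all full transformations $\sigma,\rho$ of $\{1,\ldots,n\}$. Consequently, if a product $c_1c_2\cdots c_m$ of elements of $\wEnd P_n$ equals a permutation (i.e.\ has image size $n$), then an immediate induction shows $|\im(c_t)|=n$ for every $t$, so each factor $c_t$ is itself a permutation.

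Now suppose $C$ is a generating set of $\End P_n$ (respectively, of $\wEnd P_n$) as a monoid, and take $n\ge2$. Since $\tau$ belongs to the monoid generated by $C$, we may write $\tau=c_1\cdots c_m$ with $c_1,\ldots,c_m\in C$. By the previous paragraph each $c_t$ is a permutation, and hence $c_t\in\{1,\tau\}$ by the first step. Therefore the product reduces to $\tau^k$, where $k$ is the number of factors equal to $\tau$; as $\tau^2=1$ and $\tau\ne1$, the equality $\tau^k=\tau$ forces $k$ to be odd, so at least one factor equals $\tau$. That factor is an element of $C$, whence $\tau\in C$, as required.

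There is essentially no obstacle here, which is precisely why the lemma was announced as clear. The only two points deserving a word of care are the identification of the bijective (weak) endomorphisms of $P_n$ with $\{1,\tau\}$, which rests on Theorem \ref{aut}, and the submultiplicativity of the image size under composition; both are entirely standard, so the argument above can be written out in a few lines.
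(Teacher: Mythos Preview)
Your argument is correct and is exactly the natural way to make the lemma explicit; the paper itself gives no proof, simply declaring the result ``clear'', so there is nothing to compare against beyond noting that your reasoning (bijective elements of $\wEnd P_n$ are automorphisms, hence lie in $\{1,\tau\}$ by Theorem~\ref{aut}, and a product equal to a permutation forces every factor to be a permutation) is precisely the standard justification one would supply. Your care in restricting to $n\ge 2$ is appropriate, since for $n=1$ the monoid is trivial and the empty set generates it.
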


Notice that $\Inv(\tau)=\emptyset$. Moreover, for $\alpha\in\End P_n$, 
we have $\Inv(\alpha)=\emptyset$ if and only if $\alpha=1$ or $\alpha=\tau$. 

\begin{lemma}\label{3w}
Let $C$ be a generating set of $\End P_n$ or of $\wEnd P_n$. Then $C$ possesses at least $\lfloor\frac{n-1}{2}\rfloor$ 
distinct transformations $\alpha$ with $\inv(\alpha)=1$. 
\end{lemma}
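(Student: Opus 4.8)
The plan is to exploit the reflection $\rho$ on the set of admissible inversion positions $\{2,\dots,n-1\}$ given by $\rho(p)=n+1-p$, and to show that any generating set $C$ must contain a distinct single-inversion transformation for each orbit of $\rho$. First I would record that $\rho$ is an involution of $\{2,\dots,n-1\}$ having a fixed point precisely when $n$ is odd (namely $p=\frac{n+1}{2}$). A direct orbit count then yields exactly $\lfloor\frac{n-1}{2}\rfloor$ orbits: one fixed point together with $\frac{n-3}{2}$ two-element orbits when $n$ is odd, and $\frac{n-2}{2}$ two-element orbits when $n$ is even. Thus it suffices to attach to each orbit a single-inversion element of $C$ so that distinct orbits receive distinct elements.

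The core step is the claim: if $\alpha\in\End P_n$ has $\inv(\alpha)=1$, say $\Inv(\alpha)=\{j\}$, and $\alpha=c_1\cdots c_m$ with $c_1,\dots,c_m\in C$, then some $c_t$ satisfies $\inv(c_t)=1$ with $\Inv(c_t)\subseteq\{j,n+1-j\}$. I would prove this by (strong) induction on $m$. Writing $\alpha=c_1(c_2\cdots c_m)$, Lemma~\ref{1w}(1) gives $c_1\in\End P_n$ and Lemma~\ref{1w}(2) gives $\Inv(c_1)\subseteq\Inv(\alpha)=\{j\}$. If $\Inv(c_1)=\{j\}$ we are done with $c_t=c_1$. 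Otherwise $\Inv(c_1)=\emptyset$, so by the observation following Lemma~\ref{2w} we have $c_1\in\{1,\tau\}$: if $c_1=1$ we drop it and apply induction to the shorter product $c_2\cdots c_m=\alpha$; if $c_1=\tau$ then $c_2\cdots c_m=\tau\alpha\in\End P_n$, and by Lemma~\ref{1wppp} we get $\Inv(\tau\alpha)=\{n-j+1\}$, so the induction hypothesis applied to $\tau\alpha$ (whose single inversion lies in the same $\rho$-orbit as $j$) produces the required generator. The base case $m=1$ forces $\Inv(c_1)=\{j\}$, since $c_1\in\{1,\tau\}$ would make $\alpha$ inversion-free.

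Finally I would assemble the count. For each $\rho$-orbit $O$ choose any $j\in O$; then $\alpha_{j-1}\in\End P_n$ with $\Inv(\alpha_{j-1})=\{j\}$, and since $C$ generates, the core claim yields a generator $c\in C$ with $\inv(c)=1$ and $\Inv(c)\subseteq\{j,n+1-j\}=O$. A single-inversion transformation has a one-element inversion set, so it is assigned to exactly one orbit; hence the generators obtained from distinct orbits are pairwise distinct, and $C$ contains at least $\lfloor\frac{n-1}{2}\rfloor$ transformations $\alpha$ with $\inv(\alpha)=1$. I expect the main obstacle to be the core claim: one must control how leading automorphism factors $\tau$ interact with a single inversion, and the key point---isolated by Lemmas~\ref{1w} and~\ref{1wppp}---is that such factors can only \emph{reflect} the inversion across the midpoint, neither creating nor destroying one. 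This is precisely what confines the whole induction to a single $\rho$-orbit and makes the orbit bookkeeping valid.
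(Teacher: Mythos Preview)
Your proof is correct and follows essentially the same approach as the paper: both arguments pair off the inversion positions $\{2,\dots,n-1\}$ under the reflection $p\mapsto n+1-p$, and use Lemmas~\ref{1w} and~\ref{1wppp} to show that leading factors in a product representing a single-inversion element must be $1$, $\tau$, or a single-inversion generator from the same orbit. The only cosmetic difference is that the paper normalizes the product (removing identities and consecutive $\tau$'s) and argues by contradiction on $\xi_1,\xi_2$, whereas you peel off leading factors by strong induction on the length; the underlying mechanism is identical.
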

\begin{proof}
In order to obtain a contradiction, let us assume that $C$ contains less than $\lfloor\frac{n-1}{2}\rfloor$ 
distinct transformations $\alpha$ with $\inv(\alpha)=1$. Then, there exists $i\in\{2,\ldots,\lfloor\frac{n+1}{2}\rfloor\}$ such that 
$\{i,n-i+1\}\cap\Inv(\alpha)=\emptyset$, for all $\alpha\in C$. 
As $1\le i-1\le \lfloor\frac{n-1}{2}\rfloor$, we may consider the transformation $\alpha_{i-1}\in A$. 

Let $\xi_1,\ldots,\xi_k\in C\setminus\{1\}$ be such that $\alpha_{i-1}=\xi_1\cdots\xi_k$ and $\{\xi_j,\xi_{j+1}\}\ne\{\tau\}$, for $j=1,\ldots,k-1$. 
Notice that $\Inv(\alpha_{i-1})=\{i\}$, whence $\alpha_{i-1}\not\in C$ and so $k\ge2$. 
Moreover, as $\alpha_{i-1}\in\End P_n$, 
by Lemma \ref{1w}, we have $\xi_1\in\End P_n$ and 
$
\Inv(\xi_1)\subseteq\Inv(\xi_1\cdots\xi_k)=\Inv(\alpha_{i-1})=\{i\}. 
$
Then $\Inv(\xi_1)=\emptyset$, since $\xi_1\in C$, and so $\xi_1=\tau$ (since $\xi_1\in\End P_n$ and $\xi_1\ne1$). 

Applying Lemma \ref{1w} again, we obtain 
$\Inv(\tau\xi_2)=\Inv(\xi_1\xi_2)\subseteq\Inv(\xi_1\cdots\xi_k)=\Inv(\alpha_{i-1})=\{i\}$ and 
$\tau\xi_2\in\End P_n$. 
Hence, $\Inv(\tau\xi_2)=\emptyset$ or $\Inv(\tau\xi_2)=\{i\}$. 

Suppose that $\Inv(\tau\xi_2)=\emptyset$.  Then $\tau\xi_2=1$ or 
$\tau\xi_2=\tau$, and so $\xi_2=\tau$ or $\xi_2=1$, which is not possible since $\{\xi_1,\xi_2\}\ne\{\tau\}$ and $\xi_2\ne1$. 
Thus, we must have $\Inv(\tau\xi_2)=\{i\}$ and so, by Lemma \ref{1wppp}, it follows that $\Inv(\xi_2)=\{n-i+1\}$, 
which is a contradiction, since $\xi_2\in C$. 

Therefore, $C$ must contain at least $\lfloor\frac{n-1}{2}\rfloor$ 
distinct transformations $\alpha$ with $\inv(\alpha)=1$, as required. 
\end{proof}

\begin{lemma}\label{uneq}
Let $j\in\{1,\ldots,\lfloor\frac{n-3}{3}\rfloor\}$ and $i\in\{1,\ldots,\lfloor\frac{n-3j-1}{2}\rfloor\}$. Then 
$$
2\le i+1\le n-2, \quad 4\le i+2j+1\le n-2\quad\text{and}\quad 5\le i+3j+1\le n-1. 
$$
\end{lemma}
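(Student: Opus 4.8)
The statement to prove is Lemma~\ref{uneq}, which consists of six inequalities bounding the quantities $i+1$, $i+2j+1$, and $i+3j+1$ under the hypotheses $1 \le j \le \lfloor\frac{n-3}{3}\rfloor$ and $1 \le i \le \lfloor\frac{n-3j-1}{2}\rfloor$.

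\medskip

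\textbf{Proof proposal.} The plan is to verify each of the six inequalities directly by elementary arithmetic manipulation of the given constraints, treating the three lower bounds and the three upper bounds separately. The lower bounds are the easy half: since $i \ge 1$ and $j \ge 1$, we immediately get $i+1 \ge 2$, and $i+2j+1 \ge 1 + 2 + 1 = 4$, and $i+3j+1 \ge 1+3+1 = 5$, so these follow at once from the trivial estimates on $i$ and $j$. The substance of the lemma lies in the three upper bounds, and for these the key input is the hypothesis $i \le \lfloor\frac{n-3j-1}{2}\rfloor$, which in particular gives $i \le \frac{n-3j-1}{2}$, i.e. $2i \le n-3j-1$, or equivalently $2i + 3j + 1 \le n$.

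\medskip

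First I would extract the master inequality $2i + 3j \le n - 1$ from the bound on $i$. From this, the upper bound $i + 3j + 1 \le n-1$ is the most direct: I would write $i + 3j + 1 \le 2i + 3j + 1 \le n$ using $i \ge 1$ to absorb a copy of $i$, but more carefully, since $2i+3j \le n-1$ and $i \ge 1$, we have $i + 3j \le 2i + 3j - i \le (n-1) - 1 = n-2$, whence $i+3j+1 \le n-1$, as wanted. For the bound $i + 2j + 1 \le n-2$, I would again start from $2i + 3j \le n-1$ and use $i \ge 1$ and $j \ge 1$ to reduce: since $i + 2j = (2i+3j) - (i+j) \le (n-1) - 2 = n-3$, we obtain $i + 2j + 1 \le n-2$. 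Finally, the bound $i + 1 \le n-2$ follows since $i + 1 \le i + 2j + 1 \le n-2$ (as $j \ge 1$ makes $2j \ge 0$ nonnegative), so it is implied by the previous estimate; alternatively $i \le \frac{n-3j-1}{2} \le \frac{n-4}{2} < n-3$ for $j \ge 1$ gives $i+1 \le n-2$ directly.

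\medskip

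I do not expect any genuine obstacle here: the lemma is a routine bookkeeping statement whose sole purpose is to certify that the indices appearing in the generators $\beta_{j,i}$ and in the subsequent inversion arguments lie in the admissible ranges $\{2,\ldots,n-1\}$. The only point requiring the slightest care is to handle the floor function correctly---namely to note that $i \le \lfloor\frac{n-3j-1}{2}\rfloor \le \frac{n-3j-1}{2}$ so that the integer bound on $i$ yields the real inequality $2i+3j+1 \le n$, from which everything else cascades by using $i \ge 1$ and $j \ge 1$ to trade terms. Once the master inequality $2i+3j+1 \le n$ is in hand, all six bounds are immediate consequences, and I would present them compactly in a single short chain of estimates rather than six separate computations.
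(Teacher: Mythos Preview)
Your proof is correct and, like the paper's, proceeds by direct arithmetic verification of each inequality. Your organization around the single master inequality $2i+3j+1\le n$ (derived from $i\le\frac{n-3j-1}{2}$, together with $i,j\ge1$) is slightly tidier than the paper's chain of substitutions, which also invokes the upper bound $j\le\frac{n-3}{3}$; but the two arguments are essentially the same.
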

\begin{proof}
We have $2\le i+1\le \frac{n-3j-1}{2}+1\le \frac{n-3-1}{2}+1=\frac{n}{2}-1< n-1$. 
On the other hand, $4\le i+2j+1\le i+j+j+1\le\frac{n-3j-1}{2} + \frac{n-3}{3} +j+1 
=\frac{5n-3j-9}{6}+1\le \frac{5n-3-9}{6}+1=\frac{5n}{6}-1<n-1$.  
Finally, $5\le i+3j+1\le  \frac{n-3j-1}{2}+3j+1=\frac{n+3j+1}{2}\le \frac{n+3\cdot\frac{n-3}{3}+1}{2}=n-1$, as required. 
\end{proof}

\begin{lemma}\label{4w}
Let $C$ be a generating set of $\End P_n$ or of $\wEnd P_n$. Then $C$ possesses at least 
$\sum_{j=1}^{\lfloor\frac{n-3}{3}\rfloor}\lfloor\frac{n-3j-1}{2}\rfloor$
distinct transformations $\alpha$ with $\inv(\alpha)=2$. 
\end{lemma}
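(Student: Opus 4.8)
The plan is to adapt the argument of Lemma~\ref{3w}, replacing single inversions by the two-element inversion sets of the maps $\beta_{j,i}$. First I would record that, directly from the definition of $\beta_{j,i}$, one has $\Inv(\beta_{j,i})=\{i+j+1,i+2j+1\}$ (a set of size two whose elements differ by $j$) and, by Lemma~\ref{1wppp}, $\Inv(\tau\beta_{j,i})=\{n-i-2j,n-i-j\}$. Write $S_{j,i}=\Inv(\beta_{j,i})$ and $S_{j,i}^{\tau}=\Inv(\tau\beta_{j,i})$. Since the unordered pair $\{i+j+1,i+2j+1\}$ determines $(j,i)$ (the difference is $j$ and the smaller element is $i+j+1$), and the reflection $S\mapsto S^{\tau}$ preserves this difference and sends the index $(j,i)$ to $(j,\,n-3j-1-i)$, a short computation shows that, as $(j,i)$ runs over $j\in\{1,\ldots,\lfloor\frac{n-3}{3}\rfloor\}$ and $i\in\{1,\ldots,\lfloor\frac{n-3j-1}{2}\rfloor\}$, the pairs $\{S_{j,i},S_{j,i}^{\tau}\}$ are pairwise disjoint, so there are exactly $\sum_{j=1}^{\lfloor\frac{n-3}{3}\rfloor}\lfloor\frac{n-3j-1}{2}\rfloor$ of them. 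Hence it suffices to prove that, for each such $(j,i)$, the set $C$ contains a transformation $\zeta$ with $\inv(\zeta)=2$ and $\Inv(\zeta)\in\{S_{j,i},S_{j,i}^{\tau}\}$; distinct pairs then force the corresponding $\zeta$'s to be distinct, giving the required count.

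Fixing $(j,i)$ and writing $S=S_{j,i}=\{a,b\}$ with $a=i+j+1<b=i+2j+1$, I would use Lemma~\ref{2w} to factor $\beta_{j,i}=\xi_1\cdots\xi_k$ with $\xi_t\in C\setminus\{1\}$ and no two consecutive factors equal to $\tau$. For each prefix $\pi_t=\xi_1\cdots\xi_t$ we have $\pi_t(\xi_{t+1}\cdots\xi_k)=\beta_{j,i}\in\End P_n$, so Lemma~\ref{1w} yields $\pi_t\in\End P_n$ and $\Inv(\pi_t)\subseteq S$, and, applied to $\pi_t\xi_{t+1}=\pi_{t+1}$, that the chain $\Inv(\pi_0)\subseteq\Inv(\pi_1)\subseteq\cdots$ is increasing. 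Let $t^{*}$ be least with $\Inv(\pi_{t^{*}})=S$, so $\Inv(\pi_{t^{*}-1})\in\{\emptyset,\{a\},\{b\}\}$. In the case $\Inv(\pi_{t^{*}-1})=\emptyset$ the map $\pi_{t^{*}-1}\in\End P_n$ has no inversions, hence equals $1$ or $\tau$; if $\pi_{t^{*}-1}=1$ then $\Inv(\xi_{t^{*}})=S=S_{j,i}$, while if $\pi_{t^{*}-1}=\tau$ then $\Inv(\tau\xi_{t^{*}})=S$ and Lemma~\ref{1wppp} gives $\Inv(\xi_{t^{*}})=S_{j,i}^{\tau}$. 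Either way $\zeta=\xi_{t^{*}}\in C$ is as desired.

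The hard part, and the real content of the lemma, is to exclude the two ``split'' cases, where only one inversion of $\beta_{j,i}$ is created first. Suppose $\Inv(\pi_{t^{*}-1})=\{a\}$ (the case $\{b\}$ being symmetric, via $s=j+1$ and Lemma~\ref{uneq}). Then $\pi_{t^{*}-1}\in\End P_n$ has a single inversion, hence a single fold at $a$: by the $\pm1$-step condition of Proposition~\ref{form} this forces $(a-s)\pi_{t^{*}-1}=(a+s)\pi_{t^{*}-1}$ for all $s$ with $a-s\ge1$ and $a+s\le n$. On the other hand $\ker(\pi_{t^{*}-1})\subseteq\ker(\pi_{t^{*}})=\ker(\beta_{j,i})$, the last equality by Lemma~\ref{invker} since both maps lie in $\End P_n$ and share the inversion set $S$. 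Taking $s=j+1$, which is admissible because $a-s=i\ge1$ and $a+s=i+2j+2\le n$ by Lemma~\ref{uneq}, these two facts would force $i\beta_{j,i}=(i+2j+2)\beta_{j,i}$; but $i\beta_{j,i}=i$ whereas $(i+2j+2)\beta_{j,i}=i+2$, a contradiction. Thus only the case $\Inv(\pi_{t^{*}-1})=\emptyset$ can occur, and the proof is complete. I expect this kernel ``over-merging'' step to be the main obstacle: Lemmas~\ref{1w}--\ref{1wpp} only guarantee that the completing factor $\xi_{t^{*}}$ acquires \emph{some} inversion, not two, and it is precisely the geometry of $\beta_{j,i}$ — that a single fold of $\pi_{t^{*}-1}$ would identify the pair $\{i,\,i+2j+2\}$ which $\beta_{j,i}$ keeps apart — that forces both inversions to appear simultaneously and hence produces a genuine $\inv=2$ generator in $C$.
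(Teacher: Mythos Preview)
Your argument is correct. The overall architecture matches the paper's --- for each pair $(j,i)$ you show that any factorisation $\beta_{j,i}=\xi_1\cdots\xi_k$ in $C$ must contain a factor whose inversion set is exactly $S_{j,i}$ or $S_{j,i}^{\tau}$, and your preliminary counting argument that the pairs $\{S_{j,i},S_{j,i}^{\tau}\}$ are pairwise disjoint over the index range is a useful addition that the paper leaves implicit.

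Where you diverge is in the mechanism for excluding the ``split'' cases $\Inv(\pi_{t^*-1})\in\{\{a\},\{b\}\}$. The paper works only with the first non-trivial factor $\xi_1$ (or $\xi_2$ when $\xi_1=\tau$) and, when $\Inv(\xi_1)=\{a\}$, uses Lemma~\ref{1wpp} to push the missing inversion $b$ forward into $\Inv(\xi_2\cdots\xi_k)$ and then Lemma~\ref{1w}(3) to pull the reflected point $i+1$ back into $\Inv(\beta_{j,i})$, a contradiction; the fold distance used is $s=j$. You instead look at the whole prefix $\pi_{t^*-1}$, observe that a single fold at $a$ forces $(a-s,a+s)\in\ker(\pi_{t^*-1})\subseteq\ker(\beta_{j,i})$, and obtain the contradiction at $s=j+1$ via the explicit values $i\beta_{j,i}=i\ne i+2=(i+2j+2)\beta_{j,i}$. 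This bypasses Lemma~\ref{1wpp} entirely and replaces it with the elementary fact that kernels grow along factorisations (your appeal to Lemma~\ref{invker} for $\ker(\pi_{t^*})=\ker(\beta_{j,i})$ is in fact unnecessary: $\ker(\pi_{t^*-1})\subseteq\ker(\beta_{j,i})$ already follows directly from $\beta_{j,i}=\pi_{t^*-1}(\xi_{t^*}\cdots\xi_k)$). Both routes work; yours is arguably more self-contained, while the paper's makes systematic use of the inversion-tracking lemmas it has set up. One cosmetic point: your reference to Lemma~\ref{2w} when setting up the factorisation is out of place --- that lemma only asserts $\tau\in C$, which plays no role at that step.
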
 
\begin{proof}
Let $j\in\{1,\ldots,\lfloor\frac{n-3}{3}\rfloor\}$ and $i\in\{1,\ldots,\lfloor\frac{n-3j-1}{2}\rfloor\}$. 
In order to obtain a contradiction, let us assume that $\Inv(\alpha)\ne\{i+j+1,i+2j+1\}$ and  $\Inv(\alpha)\ne\{n-(i+j+1)+1,n-(i+2j+1)+1\}$, 
for all $\alpha\in C$. 

Let us consider the transformation $\beta_{j,i}\in\End P_n$. 
Observe that $\Inv(\beta_{j,i})=\{i+j+1,i+2j+1\}$, whence $\beta_{j,i}\not\in C$. 
Let $\xi_1,\ldots,\xi_k\in C\setminus\{1\}$ be such that $\beta_{j,i}=\xi_1\cdots\xi_k$ and $\{\xi_\ell,\xi_{\ell+1}\}\ne\{\tau\}$, 
for $\ell=1,\ldots,k-1$. 
Notice that $k\ge2$, since $\beta_{j,i}\not\in C$. 
As $\beta_{j,i}\in\End P_n$, 
by Lemma \ref{1w}, we have $\xi_1\in\End P_n$ and 
$
\Inv(\xi_1)\subseteq\Inv(\xi_1\cdots\xi_k)=\Inv(\beta_{j,i})=\{i+j+1,i+2j+1\}. 
$ 
Since $\xi_1\in C$, then $\inv(\xi_1)=0$ or $\inv(\xi_1)=1$. 

If $\inv(\xi_1)=1$ then $\Inv(\xi_1)\in\{\{i+j+1\},\{i+2j+1\}\}$. 

On the other hand, suppose that $\inv(\xi_1)=0$. As $\xi_1\in\End P_n$ (and $\xi_1\ne1$), then we must have $\xi_1=\tau$. 
By Lemma \ref{1w}, we get  
$\Inv(\tau\xi_2)=\Inv(\xi_1\xi_2)\subseteq\Inv(\xi_1\cdots\xi_k)=\Inv(\beta_{j,i})=\{i+j+1,i+2j+1\}$ and 
$\tau\xi_2\in\End P_n$. 
It follows that $\xi_2\in\End P_n$ and, 
by Lemma \ref{1wppp}, that $\Inv(\xi_2)\subseteq \{n-(i+j+1)+1,n-(i+2j+1)+1\}$. 
As $\xi_2\in C$, we obtain $\inv(\xi_2)=0$ or $\inv(\xi_2)=1$. 
If $\inv(\xi_2)=0$ then $\xi_2=\tau$ (since $\xi_2\ne1$ and $\xi_2\in\End P_n$) and so $\{\xi_1,\xi_2\}=\{\tau\}$, 
which is a contradiction. Thus $\Inv(\xi_2)\in\{\{n-(i+j+1)+1\},\{n-(i+2j+1)+1\}\}$. 
Also, notice that, in this case, $k\ge3$ (since $k=2$ would imply $\xi_2=\tau\beta_{j,i}$ and so $\inv(\xi_2)=2$, which is a contradiction). 

Therefore, we have four cases to consider. 

\smallskip 

{\sc case}  1. $\Inv(\xi_1)=\{i+j+1\}$. Then, as $\xi_1\in\End P_n$, we must have $(i+2j+1)\xi_1=(i+1)\xi_1$. 
On the other hand, since $i+2j+1\in\Inv(\beta_{j,i})=\Inv(\xi_1(\xi_2\cdots\xi_k))$ and $i+2j+1\not\in\Inv(\xi_1)$, by Lemma \ref{1wpp}, we obtain 
$(i+2j+1)\xi_1\in\Inv(\xi_2\cdots\xi_k)$.  Thus $(i+1)\xi_1\in \Inv(\xi_2\cdots\xi_k)$.  
Now,  as $2\le i+1\le n-2$ (by Lemma \ref{uneq}), it follows by Lemma \ref{1w} that 
$i+1\in \Inv(\xi_1(\xi_2\cdots\xi_k))=\Inv(\beta_{j,i})$, which is a contradiction. 

\smallskip 

{\sc case}  2. $\Inv(\xi_1)=\{i+2j+1\}$. 
Notice that $5\le i+3j+1\le n-1$, by Lemma \ref{uneq}. 
As $\xi_1\in\End P_n$, in this case, we have $(i+3j+1)\xi_1=(i+j+1)\xi_1$. 
Since $i+j+1\in\Inv(\beta_{j,i})=\Inv(\xi_1(\xi_2\cdots\xi_k))$ and $i+j+1\not\in\Inv(\xi_1)$, by Lemma \ref{1wpp}, we obtain 
$(i+j+1)\xi_1\in\Inv(\xi_2\cdots\xi_k)$, i.e. $(i+3j+1)\xi_1\in \Inv(\xi_2\cdots\xi_k)$.  
Hence,  by Lemma \ref{1w}, we get $i+3j+1\in \Inv(\xi_1(\xi_2\cdots\xi_k))=\Inv(\beta_{j,i})$, which is a contradiction. 

\smallskip 

Before considering the next case, we observe that $\Inv(\tau\beta_{j,i})=\{n-(i+j+1)+1,n-(i+2j+1)+1\}$, by Lemma \ref{1wppp}. 

\smallskip 

{\sc case}  3. $\xi_1=\tau$ and $\Inv(\xi_2)=\{n-(i+j+1)+1\}$. 
Since $\xi_2\in\End P_n$, we deduce that $(n-(i+1)+1)\xi_2=(n-(i+2j+1)+1)\xi_2$. 
Moreover, as $n-(i+2j+1)+1\in\Inv(\tau\beta_{j,i})$,  $n-(i+2j+1)+1\not\in\Inv(\xi_2)$ and $\tau\beta_{j,i}=\xi_2\xi_3\cdots\xi_k$ (notice that, in this case, $k\ge3$), by Lemma \ref{1wpp}, we have $(n-(i+2j+1)+1)\xi_2\in\Inv(\xi_3\cdots\xi_k)$. 
Thus, $(n-(i+1)+1)\xi_2\in\Inv(\xi_3\cdots\xi_k)$.  From $2\le i+1\le n-2$ (by Lemma \ref{uneq}), we obtain $3\le n-(i+1)+1\le n-1$ and so, 
by Lemma \ref{1w}, it follows that $n-(i+1)+1\in\Inv(\xi_2(\xi_3\cdots\xi_k))=\Inv(\tau\beta_{j,i})$, which is a contradiction. 

\smallskip 

{\sc case}  4. $\xi_1=\tau$ and $\Inv(\xi_2)=\{n-(i+2j+1)+1\}$. Once again  
since $\xi_2\in\End P_n$, we conclude that $(n-(i+j+1)+1)\xi_2=(n-(i+3j+1)+1)\xi_2$. 
On the other hand, as $n-(i+j+1)+1\in\Inv(\tau\beta_{j,i})$,  $n-(i+j+1)+1\not\in\Inv(\xi_2)$ and $\tau\beta_{j,i}=\xi_2\xi_3\cdots\xi_k$ ($k\ge3$, also in this case), by Lemma \ref{1wpp}, we have $(n-(i+j+1)+1)\xi_2\in\Inv(\xi_3\cdots\xi_k)$ and so 
$(n-(i+3j+1)+1)\xi_2\in\Inv(\xi_3\cdots\xi_k)$.  By Lemma \ref{uneq}, we have $5\le i+3j+1\le n-1$, whence $2\le n-(i+3j+1)+1\le n-4$. Hence, 
by Lemma \ref{1w}, we obtain $n-(i+3j+1)+1\in\Inv(\xi_2(\xi_3\cdots\xi_k))=\Inv(\tau\beta_{j,i})$, which is a contradiction. 

\smallskip 

Since we obtained a contradiction in all possible cases, it follows that $\Inv(\alpha)=\{i+j+1,i+2j+1\}$ or 
$\Inv(\alpha)=\{n-(i+j+1)+1,n-(i+2j+1)+1\}$, 
for some $\alpha\in C$. Therefore $C$ has at least 
$\sum_{j=1}^{\lfloor\frac{n-3}{3}\rfloor}\lfloor\frac{n-3j-1}{2}\rfloor$
distinct transformations $\alpha$ with $\inv(\alpha)=2$, as required. 
\end{proof}

Now, as an immediate consequence of Proposition \ref{gend} and Lemmas \ref{2w}, \ref{3w} and \ref{4w}, we have: 

\begin{theorem}
The rank of $\End P_n$ is equal to $1+\lfloor\frac{n-1}{2}\rfloor+\sum_{j=1}^{\lfloor\frac{n-3}{3}\rfloor}\lfloor\frac{n-3j-1}{2}\rfloor$. 
\end{theorem}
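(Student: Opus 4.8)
The plan is to read off the upper bound from the explicit generating set already constructed, and to obtain the matching lower bound by sorting an arbitrary generating set according to the inversion number $\inv$ and invoking the three counting lemmas on the resulting pieces. The whole argument is an assembly: all the genuine work has been absorbed into Proposition \ref{gend} and Lemmas \ref{2w}, \ref{3w}, \ref{4w}, and the one thing that must be verified by hand is that the families of generators counted by those lemmas are pairwise disjoint, so their cardinalities add rather than overlap.

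For the upper bound I would simply cite Proposition \ref{gend}, which exhibits $A$ as a generating set of $\End P_n$ with $|A|=1+\lfloor\frac{n-1}{2}\rfloor+\sum_{j=1}^{\lfloor\frac{n-3}{3}\rfloor}\lfloor\frac{n-3j-1}{2}\rfloor$. Hence the rank of $\End P_n$ is at most this quantity.

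For the lower bound I would take an arbitrary generating set $C$ of $\End P_n$ and classify its elements by $\inv$. By Lemma \ref{2w} we have $\tau\in C$, and since $\Inv(\tau)=\emptyset$ this furnishes one generator with $\inv=0$; indeed, by the observation preceding Lemma \ref{3w}, the only elements of $\End P_n$ with empty inversion set are $1$ and $\tau$, so $\tau$ is genuinely the unique contribution at this level. By Lemma \ref{3w}, $C$ contains at least $\lfloor\frac{n-1}{2}\rfloor$ distinct transformations $\alpha$ with $\inv(\alpha)=1$, and by Lemma \ref{4w} it contains at least $\sum_{j=1}^{\lfloor\frac{n-3}{3}\rfloor}\lfloor\frac{n-3j-1}{2}\rfloor$ distinct transformations $\alpha$ with $\inv(\alpha)=2$. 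These three families of generators are pairwise disjoint, being separated by the values $0,1,2$ of $\inv$, so
$$
|C|\ \ge\ 1+\left\lfloor\tfrac{n-1}{2}\right\rfloor+\sum_{j=1}^{\lfloor\frac{n-3}{3}\rfloor}\left\lfloor\tfrac{n-3j-1}{2}\right\rfloor.
$$

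Since $C$ was an arbitrary generating set, this lower bound combined with the upper bound from Proposition \ref{gend} forces equality, and the stated value of the rank follows. There is no real obstacle at this stage beyond recording the disjointness of the three $\inv$-classes; the delicate combinatorics have all been done earlier, in the proof that $A$ generates (Lemmas \ref{ger}, \ref{le1}, \ref{le}) and in the inversion-counting arguments of Lemmas \ref{3w} and \ref{4w}, where the interaction of $\inv$ with composition (Lemmas \ref{1w}, \ref{1wpp}, \ref{1wppp}) was exploited.
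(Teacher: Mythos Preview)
Your proposal is correct and follows exactly the paper's approach: the paper states the theorem as ``an immediate consequence of Proposition \ref{gend} and Lemmas \ref{2w}, \ref{3w} and \ref{4w}''. Your only addition is making explicit the disjointness of the three $\inv$-classes, which is the obvious reason the lower bounds add; otherwise the argument is identical.
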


To calculate the rank of $\wEnd P_n$, we still need the following lemma. 

\begin{lemma}\label{5w}
Let $C$ be a generating set of $\wEnd P_n$. Then $C$ possesses at least 
$\lfloor\frac{n}{2}\rfloor$ distinct transformations $\alpha\in\wEnd P_n\setminus\End P_n$ with $\inv(\alpha)=0$. 
\end{lemma}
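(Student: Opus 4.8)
The plan is to test the generating set $C$ against the elements $\gamma_1,\dots,\gamma_{\lfloor n/2\rfloor}$, each of which lies in $\wEnd P_n\setminus\End P_n$, has $\inv=0$, satisfies $\im(\gamma_i)=\{1,\dots,n-1\}$, and has $\ker(\gamma_i)$ identifying exactly the adjacent pair $\{i,i+1\}$ (its single repetition being at position $i$). Since $C$ generates $\wEnd P_n$, each $\gamma_i$ can be written as $\gamma_i=\xi_1\cdots\xi_k$ with $\xi_1,\dots,\xi_k\in C\setminus\{1\}$; the goal is to locate inside such a word a generator from $C$ that is itself a non-endomorphism with $\inv=0$ and repetition at position $i$ or $n-i$, and then to conclude by pigeonhole.

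First I would use that the image cannot grow along a product, i.e.\ $|\im(\xi_1\cdots\xi_k)|\le|\im(\xi_j)|$ for every $j$. As $|\im(\gamma_i)|=n-1$, each factor satisfies $|\im(\xi_j)|\in\{n-1,n\}$; a bijective weak endomorphism is an automorphism, so a factor with image size $n$ equals $\tau$ (since $\Aut P_n=\{1,\tau\}$ by Theorem \ref{aut} and $1\notin C\setminus\{1\}$). Let $m$ be the least index with $\xi_m\neq\tau$, which exists because $\gamma_i\notin\Aut P_n$. Then $\xi_1\cdots\xi_{m-1}\in\{1,\tau\}$, so $\delta:=\xi_m\cdots\xi_k=\tau^{\,m-1}\gamma_i\in\{\gamma_i,\tau\gamma_i\}$; in either case $\delta$ is a non-endomorphism with $\inv(\delta)=0$, $|\im(\delta)|=n-1$, and $\ker(\delta)$ identifying a single adjacent pair, namely $\{i,i+1\}$ or $\{n-i,n-i+1\}$ (the latter because left multiplication by $\tau$ carries the repetition at $i$ to the repetition at $n-i$).

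The heart of the argument, and the step I expect to be the main obstacle, is to pin down $\xi_m$ itself. Since $\delta=\xi_m(\xi_{m+1}\cdots\xi_k)$ we have $\ker(\xi_m)\subseteq\ker(\delta)$; and as $|\im(\xi_m)|=n-1$, the partition $\ker(\xi_m)$ has exactly one non-singleton block, of size $2$, which must sit inside the unique non-singleton block of $\ker(\delta)$, forcing $\ker(\xi_m)=\ker(\delta)$. Thus $\xi_m$ identifies exactly that adjacent pair, say $\{i,i+1\}$, so $i\xi_m=(i+1)\xi_m$, giving $\rep(\xi_m)\ge1$ and $\xi_m\notin\End P_n$. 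Moreover an inversion of $\xi_m$ at a position $j$ would identify the pair $\{j-1,j+1\}$, which is at distance $2$ and hence cannot be contained in the only non-singleton block $\{i,i+1\}$ of $\ker(\xi_m)$; therefore $\inv(\xi_m)=0$. Consequently $\xi_m\in C$ is a non-endomorphism with $\inv=0$ whose unique repetition lies at position $i$ or $n-i$.

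Finally I would run the pigeonhole. The classes $\{i,n-i\}$, for $i=1,\dots,\lfloor n/2\rfloor$, are pairwise disjoint and partition $\{1,\dots,n-1\}$ into exactly $\lfloor n/2\rfloor$ blocks. By the preceding step, for each $i$ the set $C$ must contain a transformation in $\wEnd P_n\setminus\End P_n$ with $\inv=0$ whose single repetition lies in the block $\{i,n-i\}$. Such a transformation has exactly one repetition position, so it can serve for at most one index $i$; as the blocks are disjoint, the $\lfloor n/2\rfloor$ required generators are pairwise distinct. Hence $C$ possesses at least $\lfloor n/2\rfloor$ distinct transformations in $\wEnd P_n\setminus\End P_n$ with $\inv=0$, as required.
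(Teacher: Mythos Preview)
Your proof is correct and follows essentially the same line as the paper's: write $\gamma_i=\xi_1\cdots\xi_k$, peel off any leading copies of $\tau$, and use $\ker(\xi_m)\subseteq\ker(\tau^{m-1}\gamma_i)$ together with $|\im(\xi_m)|=n-1$ to force $\ker(\xi_m)$ to identify exactly the adjacent pair $\{i,i+1\}$ or $\{n-i,n-i+1\}$, whence $\xi_m\in\wEnd P_n\setminus\End P_n$ with $\inv(\xi_m)=0$. The only cosmetic difference is that the paper assumes from the outset that no two consecutive factors equal $\tau$ (so the first non-$\tau$ factor is $\xi_1$ or $\xi_2$), whereas you allow an arbitrary prefix of $\tau$'s and collapse it to $\tau^{m-1}$; your explicit pigeonhole on the blocks $\{i,n-i\}$ and your verification that an inversion would force a distance-$2$ pair into $\ker(\xi_m)$ are spelled out more fully than in the paper, but the argument is the same.
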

\begin{proof}
Let $i\in\{1,\ldots,\lfloor\frac{n}{2}\rfloor\}$. 
Let $\xi_1,\ldots,\xi_k\in C\setminus\{1\}$ be such that $\gamma_i=\xi_1\cdots\xi_k$ and $\{\xi_j,\xi_{j+1}\}\ne\{\tau\}$, for $j=1,\ldots,k-1$. 
Then 
$$
\ker(\xi_1)\subseteq\ker(\gamma_i)=\text{id}_n \cup\{(i,i+1),(i+1,i)\},
$$ 
whence $\xi_1$ is a permutation of $\{1,\ldots,n\}$ or $\ker(\xi_1)=\ker(\gamma_i)$ and so, as $\xi_1\ne1$, 
$\xi_1=\tau$ or $\ker(\xi_1)=\ker(\gamma_i)$. 

Suppose that $\xi_1=\tau$. Then $k\ge2$ and $\tau\gamma_i=\xi_2\cdots\xi_k$. Hence 
$$
\ker(\xi_2)\subseteq\ker(\tau\gamma_i)=\text{id}_n \cup\{(n-i,n-i+1),(n-i+1,n-i)\}. 
$$
Since $\{\xi_1,\xi_2\}\ne\{\tau\}$, then $\xi_2\ne\tau$ and so 
$$
\ker(\xi_2)=\ker(\tau\gamma_i)=\text{id}_n \cup\{(n-i,n-i+1),(n-i+1,n-i)\}. 
$$

Therefore,  $C$ possesses at least $\lfloor\frac{n}{2}\rfloor$ (distinct) transformations $\alpha$ such that 
$$
\ker(\alpha)=\text{id}_n \cup\{(i,i+1),(i+1,i)\},
$$
for some $i\in\{1,\ldots,n-1\}$. Clearly, $\alpha\in\wEnd P_n\setminus\End P_n$ and $\inv(\alpha)=0$, 
for all transformations $\alpha$ with this type of kernel. 
This completes the proof of the lemma. 
\end{proof}

Recall that the \textit{relative rank} of a semigroup (or a monoid) $S$ \textit{modulo} $X$, where $X$ is a subset of $S$, 
is the minimum cardinality of a subset $Y$ of $S$ such that $S$ is generated by $X\cup Y$. 

\smallskip 

Finally, Proposition \ref{gwend} together with Lemmas \ref{2w}, \ref{3w}, \ref{4w} and \ref{5w}, allow us to conclude: 

\begin{theorem}
The rank of $\wEnd P_n$ is equal to $n+\sum_{j=1}^{\lfloor\frac{n-3}{3}\rfloor}\lfloor\frac{n-3j-1}{2}\rfloor$. 
Moreover, the relative rank of $\wEnd P_n$ modulo $\End P_n$ is equal to $\lfloor\frac{n}{2}\rfloor$. 
\end{theorem}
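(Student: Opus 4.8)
The plan is to establish matching upper and lower bounds, both for the absolute rank and for the relative rank modulo $\End P_n$, by combining the generating set from Proposition \ref{gwend} with the four counting lemmas that immediately precede this statement. For the absolute rank the upper bound is immediate: by Proposition \ref{gwend} the set $B$ generates $\wEnd P_n$, and as already observed $|B|=n+\sum_{j=1}^{\lfloor\frac{n-3}{3}\rfloor}\lfloor\frac{n-3j-1}{2}\rfloor$, so the rank is at most this quantity. For the lower bound I would take an arbitrary generating set $C$ of $\wEnd P_n$ and invoke the four lemmas in turn: Lemma \ref{2w} forces $\tau\in C$, a single transformation with $\inv=0$ lying in $\End P_n$; Lemma \ref{3w} forces at least $\lfloor\frac{n-1}{2}\rfloor$ distinct transformations with $\inv=1$; Lemma \ref{4w} forces at least $\sum_{j=1}^{\lfloor\frac{n-3}{3}\rfloor}\lfloor\frac{n-3j-1}{2}\rfloor$ distinct transformations with $\inv=2$; and Lemma \ref{5w} forces at least $\lfloor\frac{n}{2}\rfloor$ distinct transformations in $\wEnd P_n\setminus\End P_n$ with $\inv=0$.

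The crucial point, and the only step needing a moment's care, is that these four families are pairwise disjoint, so that their cardinalities simply add. Three of them are separated by the value of $\inv$ (namely $0$, $1$ and $2$), and within the $\inv=0$ level the element $\tau$ lies in $\End P_n$ whereas the $\lfloor\frac{n}{2}\rfloor$ transformations furnished by Lemma \ref{5w} lie outside $\End P_n$; hence no transformation is counted twice. Summing the four lower bounds yields $|C|\ge 1+\lfloor\frac{n-1}{2}\rfloor+\sum_{j=1}^{\lfloor\frac{n-3}{3}\rfloor}\lfloor\frac{n-3j-1}{2}\rfloor+\lfloor\frac{n}{2}\rfloor$, and the elementary identity $\lfloor\frac{n-1}{2}\rfloor+\lfloor\frac{n}{2}\rfloor=n-1$ collapses the constant $1+(n-1)$ into $n$, matching $|B|$ exactly. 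This settles the first assertion.

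For the relative rank modulo $\End P_n$ I would argue both inequalities analogously. The bound $\le\lfloor\frac{n}{2}\rfloor$ follows by taking $Y=\{\gamma_i\mid i=1,\ldots,\lfloor\frac{n}{2}\rfloor\}$: since the generating set $A$ of $\End P_n$ satisfies $A\subseteq\End P_n$ and $B=A\cup Y$ generates $\wEnd P_n$ by Proposition \ref{gwend}, the set $\End P_n\cup Y$ certainly generates $\wEnd P_n$. For the reverse inequality, let $Y$ be any subset with $\langle\End P_n\cup Y\rangle=\wEnd P_n$; then $C=\End P_n\cup Y$ is a generating set of $\wEnd P_n$, so Lemma \ref{5w} supplies $\lfloor\frac{n}{2}\rfloor$ distinct transformations lying in $\wEnd P_n\setminus\End P_n$. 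Since these lie outside $\End P_n$, each must come from $Y$, whence $|Y|\ge\lfloor\frac{n}{2}\rfloor$, giving the desired equality.

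I do not anticipate a genuine obstacle, since all the substantive combinatorial work has already been carried out in Lemmas \ref{3w}, \ref{4w} and \ref{5w}. The argument is essentially bookkeeping: the two things to get right are the disjointness of the four generator families (which relies on reading off both the value of $\inv$ and membership in $\End P_n$) and the floor identity that rewrites the resulting constant as $n$, after which the upper and lower bounds coincide for both statements.
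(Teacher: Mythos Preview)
Your proposal is correct and takes essentially the same approach as the paper, which simply states that Proposition~\ref{gwend} together with Lemmas~\ref{2w}, \ref{3w}, \ref{4w} and~\ref{5w} allow one to conclude. You have spelled out the bookkeeping the paper leaves implicit, namely the pairwise disjointness of the four families and the floor identity $\lfloor\frac{n-1}{2}\rfloor+\lfloor\frac{n}{2}\rfloor=n-1$, and your argument for the relative rank is likewise the intended one.
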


\section*{Acknowledgement} 

This work was produced, in part, during the visit of the first and third authors to CMA, FCT NOVA, Lisbon, in March 2017.  
The first author was supported by CMA through a visiting researcher fellowship. 



{\small \sf  
\noindent{\sc Ilinka Dimitrova}, 
Faculty of Mathematics and Natural Science, 
South-West University "Neofit Rilski", 
2700 Blagoevgrad, 
Bulgaria;  
email: ilinka\_dimitrova@swu.bg.

\medskip 

\noindent{\sc V\'\i tor H. Fernandes}, 
CMA, Departamento de Matem\'atica, 
Faculdade de Ci\^encias e Tecnologia, 
Universidade NOVA de Lisboa, 
Monte da Caparica, 
2829-516 Caparica, 
Portugal; 
e-mail: vhf@fct.unl.pt. 

\medskip

\noindent{\sc J\"{o}rg Koppitz},  
Institute of Mathematics and Informatics Bulgarian Academy of Sciences,
 1113 Sofia, Bulgaria; 
 e-mail address: koppitz@math.bas.bg.

\medskip 

\noindent{\sc Teresa M. Quinteiro}, 
Instituto Superior de Engenharia de Lisboa, 
1950-062 Lisboa, 
Portugal. 
Also: 
Centro de Matem\'atica e Aplica\c{c}\~oes, 
Faculdade de Ci\^encias e Tecnologia, 
Universidade Nova de Lisboa, 
Monte da Caparica, 
2829-516 Caparica, 
Portugal;
e-mail: tmelo@adm.isel.pt. 
} 

\end{document}